\newcommand{\qi}{\mathbf{i}}
\newcommand{\qj}{\mathbf{j}}
\newcommand{\qk}{\mathbf{k}}
\newcommand{\Cj}[1]{{#1}^\ast}
\newcommand{\No}[1]{#1 {#1}^\ast}
\newcommand{\R}{\mathbb{R}}
\newcommand{\C}{\mathbb{C}}
\newcommand{\Nat}{\mathbb{N}}
\newcommand{\N}{\mathcal{N}}
\renewcommand{\P}{\mathbb{P}}
\newcommand{\Q}{\mathbb{H}}
\newcommand{\Qt}{\mathbb{H}[t]}
\newcommand{\Qs}{\mathbb{H}[s]}
\newcommand{\Qts}{\mathbb{H}[t,s]}
\newcommand{\Rt}{\mathbb{R}[t]}
\newcommand{\Rs}{\mathbb{R}[s]}
\newcommand{\Rts}{\mathbb{R}[t,s]}
\newcommand{\Hmn}{\mathbb{H}_{mn}}
\newcommand{\Hstar}[1]{\mathbb{H}_{\ast #1}}
\newcommand{\ov}[1]{\hat{#1}}
\newcommand{\Cq}{\mathbb{C}\mathbb{H}}
\newcommand{\fact}{\mathrm{fact}}
\newcommand{\Fact}{\mathrm{Fact}}
\newcommand{\F}[5]{(F_{#1_i,#2,#3_j})_{#4,#5}}
\newcommand{\rul}{[Q_0(z)] \vee [Q_1(z)]}
\newcommand{\m}[2]{\operatorname{mult}(#1,#2)}
\newtheorem{thm}{Theorem}[section]
\newtheorem{prop}[thm]{Proposition}
\newtheorem{cor}[thm]{Corollary}
\theoremstyle{definition}
\newtheorem{defn}[thm]{Definition}
\theoremstyle{remark}
\newtheorem{rmk}[thm]{Remark}
\newtheorem{example}[thm]{Example}
\title[]{Factorization of Quaternionic Polynomials of Bi-Degree (n,1)}
\date{\today}
\author{J. Lercher \and D. F. Scharler \and H.-P. Schröcker \and J. Siegele}
\address{Department of Basic Sciences in Engineering Sciences, University of Innsbruck, Technikerstr.~13, 6020 Innsbruck, Austria}
\email{johanna.lercher@uibk.ac.at}
\email{daniel.scharler@uibk.ac.at}
\email{hans-peter.schroecker@uibk.ac.at}
\email{johannes.siegele@uibk.ac.at}
\subjclass[2020]{16S36, 12D05} 
\keywords{left/right factor, factorization algorithm, spherical kinematics, ruled surface}
\begin{document}

\begin{abstract}
  We consider polynomials of bi-degree $(n,1)$ over the skew field of
  quaternions where the indeterminates commute with each other and with all
  coefficients. Polynomials of this type do not generally admit factorizations.
  We recall a necessary and sufficient condition for existence of a
  factorization with univariate linear factors that has originally been stated
  by Skopenkov and Krasauskas. Such a factorization is, in general, non-unique
  by known factorization results for univariate quaternionic polynomials. We
  unveil existence of bivariate polynomials with non-unique factorizations that
  cannot be explained in this way and characterize them geometrically and
  algebraically. Existence of factorizations is related to the existence of
  special rulings of two different types (left/right) on the ruled surface
  parameterized by the bivariate polynomial in the projective space over the
  quaternions. Special non-uniqueness in above sense can be explained
  algebraically by commutation properties of factors in suitable factorizations.
  A necessary geometric condition for this to happen is degeneration to a point
  of at least one of the left/right rulings.
\end{abstract}

\maketitle

\section{Introduction}
\label{sec:introduction}

Let $\Q$ be the skew field of \emph{real quaternions}. Factorization theory for
univariate quaternionic polynomials $Q \in \Qt$ has been developed in
\cite{niven41,gordon65} and received recent attention because of its relation to
kinematics and mechanism science \cite{hegedus13,hegedus15}. A \emph{fundamental
  theorem of algebra} also holds true in the quaternionic case. Consequently,
each univariate quaternionic polynomial admits a factorization into linear
factors (c.~f. \cite{niven41,gordon65,Gentili08} for quaternionic polynomials
and \cite{hegedus13,li18,li19} for similar results in more general associative
real algebras). Due to the non-commutativity of the division ring $\Q$,
factorizations into linear factors need not be unique. A quaternionic polynomial
$Q \in \Qt$ of degree $n$ admits, in general, $n!$ different factorizations with
linear factors.

Not much is known about factorization theory of multivariate quaternionic
polynomials. The only publication we are aware of is the recent paper
\cite{Skopenkov19} by Skopenkov and Krasauskas. Strangely enough, they derive
interesting and innovative factorization results for bivariate quaternionic
polynomials as an auxiliary tool for a seemingly unrelated topic, the
classification of surfaces in Euclidean three space that are foliated by two
families of circles. In this article, we build upon the results and ideas of
Skopenkov and Krasauskas. One of the insights is that the bivariate case, where
factorizations are exceptional, is much harder than the univariate case, where
factorizations always exist.

Already the case of factorization of bivariate polynomials of degree one in one
of the indeterminates into univariate linear factors is interesting and will be
in the focus of this paper. Motivated by potential applications in kinematics
(c.\,f. Section~\ref{sec:kinematics}), we assume that indeterminates and
coefficients commute. Therefore, it is sufficient to consider polynomials $Q \in
\Qts$ of bi-degree $(n,1)$. Denote by $\Cj{Q}$ the polynomial obtained by
conjugating the coefficients of $Q$. A simple necessary condition for existence
of a factorization of the shape
\begin{equation}
\label{Hs1factors}
	Q=a(t-h_1)\cdots(t-h_n)(s-h)(t-k_1)\cdots(t-k_m)
\end{equation}
with quaternions $a$, $h_1$, \ldots, $h_n$, $h$, $k_1$, \ldots, $k_m \in \Q$ is
existence of real polynomials $P \in \Rt$ and $R \in \Rs$ such that $\No{Q} =
PR$. By \cite[Lemma~2.9]{Skopenkov19}, this condition is also sufficient.

In Section~\ref{sec:preliminaries} we settle our notation and recall some basic
facts on quaternionic polynomials and their factorization theory. At the
beginning of Section~\ref{sec:star-one-polynomials} we recall the original
result of Skopenkov and Krasauskas and introduce the notion of equivalent
factorizations: The factorization of the univariate polynomials
$(t-h_1)\cdots(t-h_n)$ and $(t-k_1)\cdots(t-k_m)$ is in general not unique and
this implies non-uniqueness of factorization \eqref{Hs1factors}. We consider two
factorizations of $Q$ as equivalent if they arise in this way.

Our first substantial new contribution is a geometric interpretation of
factorizability in terms of the ruled surface parameterized by $Q$ in the
complex extension of $\P(\Q) = \P^3(\R)$ in Section~\ref{sec:null-lines}. Linear
factors $t-h_i$ or $t-k_j$ correspond to ``null lines'', that is rulings of the
``null quadric'' $\N$ given by the quadratic form $q \mapsto q\Cj{q}$. In
general, factors to the left and to the right of the $s$-factor $s-h$ are
distinguished by the two families of rulings on~$\N$.

There exist bivariate polynomials of bi-degree $(n,1)$ with non-equivalent
factorizations. If $Q$ is such a polynomial, we will show that one is always
able to compute a factorization of shape $\eqref{Hs1factors}$ where either
$t-h_n$ or $t-k_1$ commutes with the $s$-factor $s-h$. The respective $t$-factor
can then be viewed as a factor that may appear on the left \emph{or} on the
right of $s-h$. Therefore, the corresponding ruled surface contains at least one
null line that can be considered as an element of both families of rulings
on~$\N$. This is only possible if the respective null line degenerates to a
point. The converse of this statement is, however, not true. A detailed
investigation of these cases is on the agenda in Sections~\ref{sec:uniqueness}
and~\ref{sec:non-uniqueness}.

While factorization of bivariate polynomials is an interesting topic in its own
right, our research is also motivated by applications in kinematics. We briefly
illustrate the underlying ideas in Section~\ref{sec:kinematics} and we also
explain why factorization results for polynomials of bi-degree $(n,1)$ are too
limiting to allow the direct transfer of kinematic constructions from
\cite{hegedus13}. This is no longer the case for polynomials of higher bi-degree
and the theory presented in this article is of fundamental importance in their
factorization theory (c.\,f.~\cite{lercher21}).

\section{Preliminaries}
\label{sec:preliminaries}

We denote the algebra of \emph{real quaternions} by $\Q$. It is generated by the quaternion units $1$, $\qi$, $\qj$ and $\qk$ over the real numbers $\R$.
An element $h \in \Q$ is of the form
\begin{equation*}
	h = h_0+h_1 \qi + h_2 \qj + h_3 \qk \quad \text{with} \quad h_0, h_1, h_2, h_3 \in \R.
\end{equation*}
The relations
\begin{equation}
\label{mult}
	\qi^2=\qj^2=\qk^2=\qi\qj\qk=-1
\end{equation}
completely define the multiplication in~$\Q$. The \emph{conjugate} of $h$ is
$\Cj{h} \coloneqq h_0 - h_1 \qi - h_2 \qj - h_3 \qk$, its \emph{norm} is given by
\begin{equation*}
\No{h}=h_0^2+h_1^2+h_2^2+h_3^2.
\end{equation*}

Multiplication of quaternions is not commutative but $\Q$ is at least a division
ring: If $h \neq 0$, it is invertible and its \emph{inverse} is given by
$h^{-1}=\Cj{h}/(\No{h})$. Conjugation of quaternions is anticommutative, hence
for $h,k \in \Q$ one obtains $\Cj{(hk)}=\Cj{k}\Cj{h}$.

By $\Cq=\Q+\mathrm{i}\Q$ we denote the algebra of \emph{complex quaternions}. It
is the $\C$-algebra generated by $1$, $\qi$, $\qj$ and $\qk$ with the
multiplication rules \eqref{mult}. Note that $\mathrm{i} \in \C$ denotes the
imaginary unit of complex numbers. It has to be distinguished from the
quaternion unit $\qi$. A complex quaternion is of the form
\begin{equation*}
	h = h_0+h_1 \qi + h_2 \qj + h_3 \qk \quad \text{with} \quad h_0, h_1, h_2, h_3 \in \C.
\end{equation*}
We define the \emph{(complex) conjugate} of $h$ by $\Cj{h} \coloneqq h_0 - h_1
\qi - h_2 \qj - h_3 \qk$ and its norm by $\No{h}=h_0^2+h_1^2+h_2^2+h_3^2$. The
algebra of complex quaternions $\Cq$ contains zero divisors. These are precisely
the elements $h \in \Cq\setminus \{0\}$ with vanishing norm $\No{h} = 0$. Any
complex quaternion $h \in \Cq\setminus \{0\}$ with $\No{h} \neq 0$ is invertible
and $h^{-1}=\Cj{h}/(\No{h})$. Sometimes we view $\Q$ (or $\Cq$) as a real (or
complex) vector space of dimension four and we also consider the real projective
space $\P(\Q) = \P^3(\R)$ (or the complex projective space $\P(\Cq) =
\P^3(\C)$). Projective points are denoted by square brackets: We write $[q]$ for
a projective quaternion represented by $q \neq 0$. For $p$, $q \neq 0$ the
symbol $[p] \vee [q]$ denotes the connecting line of $[p]$ and $[q]$ if these
points are different. If $[p] = [q]$, we have $[p] \vee [q] = [p] = [q]$.

By $\Qts$ we denote the set of bivariate polynomials with coefficients in $\Q$.
Addition and scalar multiplication are defined as in the commutative case.
Polynomial multiplication is defined by the requirement that $t$ and $s$ commute
with the coefficients and with each other. This convention is motivated by
potential later applications in kinematics and mechanism science where bivariate
polynomials parameterize two-parametric rational motions
(c.\,f.~Section~\ref{sec:kinematics}). In this context, $t$ and $s$ serve as
\emph{real} motion parameters and the real numbers form the center of~$\Q$.

For
\begin{equation*}
	Q \coloneqq \sum_{\alpha \coloneqq (\alpha_1,\alpha_2) \in \Nat_0 \times \Nat_0}q_{\alpha}t^{\alpha_1}s^{\alpha_2} \in \Qts
\end{equation*}
the \emph{conjugate polynomial} of $Q$ is defined by conjugating its coefficients:
\begin{equation*}
	\Cj{Q} \coloneqq \sum_{\alpha \coloneqq (\alpha_1,\alpha_2) \in \Nat_0 \times \Nat_0}\Cj{q}_{\alpha}t^{\alpha_1}s^{\alpha_2} \in \Qts.
\end{equation*}
The \emph{norm polynomial} $\No{Q}$ is a real bivariate polynomial.\\

In order to state our results in a clear and simple way, we often consider monic polynomials without real polynomial factors of positive degree:

\begin{itemize}
\item We call a polynomial $Q \in \Qt[s]$ \emph{monic} if its leading
  coefficient is a monic polynomial. Given a non-monic polynomial $Q =
  \sum_{i=0}^{n}Q_is^{i} \in \Qt[s]$, it suffices to consider existence and
  (non-)uniqueness of factorizations of the monic polynomial $Q'=a^{-1}Q$ where
  $a$ denotes the leading coefficient of $Q_n$.
\item Given $Q = Q_0 + Q_1\qi + Q_2\qj + Q_3\qk \in \Qts$ we denote by
  $\gcd(Q)$ the monic greatest common divisor of the polynomials $Q_0$, $Q_1$,
  $Q_2$, $Q_3 \in \Rt[s]$. There exists a unique polynomial $Q'$ with $Q =
  \gcd(Q)Q'$. Obviously, $Q'$
  has no non-constant real polynomial factor.
\end{itemize}

We use the following notations: For $Q \in \Qts$ we define $\deg_tQ$ as the
degree of $Q$ viewed as an element of $\Q[s][t]$. Analogously, $\deg_sQ$ is
the degree of $Q$ viewed as an element of $\Qt[s]$. Following Skopenkov and
Krasauskas \cite{Skopenkov19}, we define
\begin{equation*}
	\Hmn \coloneqq \{Q \in \Qts\colon \deg_tQ \leq m \text{ and } \deg_sQ \leq n\}
\end{equation*}
and
\begin{equation*}
	\Hstar{n} \coloneqq \bigcup_{m \in \Nat_0} \Hmn.
\end{equation*}
In this article, we study factorizations of bivariate polynomials of the form
\begin{equation*}
	Q=Q_0+Q_1s \in \Hstar{1}
\end{equation*}
with $Q_0, Q_1 \in \Qt$. We call them \emph{star-one-polynomials}.

\subsection{Factorization of Univariate Polynomials}
\label{sec:univariate}
Factorizability of univariate quaternionic polynomials will turn out to be of great importance for our theory. For later reference, we formulate a theorem that states some crucial univariate factorization results. We provide short sketches of proofs even though the results are already well known. For more details we refer to the respective literature.

\begin{thm}
\label{thm:univ}
	Let $Q \in \Qt$ be a quaternionic polynomial, $\deg(Q)\geq 1$. There is a unique representation $Q=\gcd(Q)Q'$ with a polynomial $Q' \in \Qt$ and $\deg(Q')=n$, $n \in \mathbb{N}_0$.
	
		\begin{enumerate}
		\item[(a)] If $\gcd(Q) \neq 1$, the real polynomial
      $\gcd(Q)$ admits a factorization into univariate linear factors over~$\Q$.
		\item[(b)] There exist up to $n$ different monic linear
      right factors of the polynomial~$Q'$.\footnote{We call a factor $t-h \in
        \Q[t]$ a linear \emph{right factor} of a univariate polynomial $Q \in
        \Q[t]$ if $Q=S(t-h)$ for an appropriate $S \in \Q[t]$. Beware that this
        common denotation in the context of non-commutative rings will change
        its meaning in the context of star-one-polynomials (c.~f. Section
        \ref{sec:null-lines}).}
		\item[(c)] There exist up to $n!$ different factorizations of $Q'$ into
      univariate linear factors over $\Q$.
	\end{enumerate}
\end{thm}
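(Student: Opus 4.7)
The plan is to treat the three parts separately, leaning on the quaternionic factor theorem and an analysis of root distribution over conjugation orbits. The preliminary decomposition $Q = \gcd(Q)\, Q'$ is immediate: $\gcd(Q) \in \Rt$ lies in the centre of $\Qt$ and $Q'$ is the unique quotient.

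For part (a), I would invoke the classical factorization of a real univariate polynomial as a product of real linear factors and real quadratic factors of negative discriminant. The linear real factors $t - a$ with $a \in \R$ are already linear over $\Q$. For each real quadratic $t^2 + bt + c$ with $b^2 - 4c < 0$, any quaternion $h$ with $\Scalar{h} = -b/2$ and $\No{h} = c$ satisfies
\begin{equation*}
  (t - h)(t - \Cj{h}) = t^2 - (h + \Cj{h})\, t + h \Cj{h} = t^2 + bt + c,
\end{equation*}
and such an $h$ exists (in fact on a whole two-sphere), giving the desired factorization of $\gcd(Q)$ into linear quaternionic factors.

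For part (b), I would use the quaternionic factor theorem: $t - h$ is a monic linear right factor of $Q'$ if and only if $Q'(h) = 0$, where evaluation substitutes $h$ for $t$ to the right of every coefficient. I would then analyse the right roots orbit-wise under the conjugation action $h \mapsto g h g^{-1}$ for $g \in \Q \setminus \{0\}$. The crucial (and classical) step is to show that on a fixed orbit either $Q'$ vanishes at every element of the orbit, in which case the real minimal polynomial of any orbit element divides $Q'$, contradicting the assumption that $Q'$ has no non-constant real divisor; or $Q'$ vanishes on at most one element of the orbit. Since the real norm polynomial $\No{Q'}$ has degree $2n$ and each orbit carrying a root contributes an irreducible real factor of $\No{Q'}$ of degree at most two, there are at most $n$ such orbits and hence at most $n$ monic linear right factors.

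For part (c), I would argue by induction on $n = \deg Q'$. The base case $n = 1$ is immediate. For $n > 1$, (b) provides at most $n$ choices of a monic linear right factor $t - h_n$; for each such choice write $Q' = S \cdot (t - h_n)$ with $\deg S = n - 1$. Any real factor of $S$ would belong to the centre of $\Qt$ and hence divide $Q'$, which is excluded by the definition of $Q'$; thus $S$ has no real factor and the inductive hypothesis yields at most $(n-1)!$ factorizations of $S$. Composing gives at most $n \cdot (n-1)! = n!$ factorizations of $Q'$. The principal obstacle is the orbit-wise root analysis at the heart of (b), which marries the non-commutative factor theorem with a counting argument on the real factors of $\No{Q'}$; once this is in hand, (a) and (c) follow by classical real factorization and a straightforward induction.
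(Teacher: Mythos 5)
Your proof is correct, and parts (a) and (c) run essentially parallel to the paper's sketch: the same construction of quaternionic roots for the irreducible real quadratic factors in (a), and the same induction that peels off monic linear right factors in (c). For part (b) you take a genuinely different route. The paper writes $\No{Q'}=\No{a}M_1\cdots M_n$ with monic irreducible real quadratics $M_i$ and, for each $M_i$, performs the division $Q'=TM_i+U$ with $U=u_1t+u_0$ to exhibit the \emph{unique} right factor $t-h_i$, $h_i=-u_1^{-1}u_0$, whose norm is $M_i$; you instead combine the factor theorem with the Gordon--Motzkin dichotomy (a conjugacy class either consists entirely of right roots or contains at most one) and count root-carrying classes against $\deg\No{Q'}=2n$. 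Both arguments are classical and valid, but note two things. First, the paper's division argument is constructive and establishes \emph{existence} of precisely one right factor for each $M_i$, whereas yours only gives the upper bound; since the theorem is later invoked (e.g.\ in the proof of Corollary~\ref{univfact}) to actually produce factorizations, you should supplement your argument with the fundamental theorem of algebra for quaternions or with the division-by-$M_i$ construction. Second, as literally stated your count via ``degree at most two'' would only bound the number of orbits by $2n$; you need to make explicit that a real root $a$ is impossible (its singleton orbit would force the central factor $t-a$ of $Q'$, which your dichotomy indeed rules out), so that every root-carrying orbit contributes a genuine irreducible quadratic and the bound $n$ follows.
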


\begin{proof}[Sketch of proof]
The polynomial $\gcd(Q)$ admits a factorization into linear factors over $\C$. All
  factorizations of $\gcd(Q)$ with linear factors over $\Q$ are found by replacing the
  complex unit $\mathrm{i}$ in each pair of conjugate complex linear factors by
  a quaternion $h$ satisfying $h^2 = -1$. These are precisely the quaternions
  with zero real part and $\No{h} = 1$ (c.\,f. \cite{huang02}). This shows Part~(a).

Since $Q'$ does not have a real polynomial factor of positive degree, the norm polynomial can be written as $\No{Q'}=\No{a} M_1\cdots M_n$ with $a \in \Q$ and monic, quadratic, irreducible real polynomials $M_1, \ldots, M_n$ (the case $n=0$ is also possible). By Part~(a), each $M_i$ can be
  factored in infinitely many ways as $M_i = (t-\Cj{h_i})(t-h_i)$, $h_i \in \Q$. There is
  precisely one linear factor $t-h_i$ that is also a right factor of $Q'$ (c.\,f.~\cite{niven41,hegedus13}). Using
  polynomial division we find $T$, $U \in \Q[t]$ such that $Q' = TM_i + U$ and $U
  = u_1t + u_0 \in \Q[t]$ whence $h_i = -u_1^{-1}u_0$. If $M_1, \ldots, M_n$ are
  pairwise different, we obtain $n$ different right factors of $Q'$. This
  construction is also necessary so that the claim of Part~(b) follows.

 In order to find a factorization of $Q'$ into univariate linear factors, we use Part~(b) and iteratively produce linear right factors of $Q'$. The thus obtained factorization depends on an order of the
factors $M_1$, \ldots, $M_n$. Using different orders one obtains all
factorizations of $Q'$. In general (if the factors $M_1$, \ldots, $M_n$ are
pairwise different), there are $n!$ factorizations (c.\,f.~\cite{hegedus13}), which proves Part~(c).	
\end{proof}

\subsection{A Necessary Condition for Bivariate Factorization}
\label{sec:necessary}

In contrast to the univariate case, a generic bivariate polynomial $Q \in \Qts$
does not admit a factorization with univariate linear factors.
\begin{defn}
\label{nec}
Let $Q \in \Qts$. We say that $Q$ satisfies the \emph{necessary factorization condition} if $\No{Q}=PR$ for $P \in \Rt$ and $R \in \Rs$.
\end{defn}
In order to see that this condition is really necessary for existence of a
factorization with univariate linear factors, assume that $Q$ can be written as
\begin{equation*}
  Q = a(u_1-h_1)\cdots(u_n-h_n)
\end{equation*}
with $u_i \in \{t, s\}$ and $a$, $h_i \in \Q$ for $i \in \{1,\ldots,n\}$. The norm polynomial is
then
\begin{equation*}
  \begin{aligned}
  \No{Q} &= a(u_1-h_1)\cdots(u_n-h_n) \Cj{(u_n-h_n)}\cdots\Cj{(u_1-h_1)}\Cj{a} \\
         &= a\Cj{a} (u_1-h_1)\Cj{(u_1-h_1)} \cdots (u_n-h_n)\Cj{(u_n-h_n)}.
  \end{aligned}
\end{equation*}
Here we repeatedly used the fact that $(u_i-h_i)\Cj{(u_i-h_i)}$ is a real
polynomial and commutes with all other polynomials. Moreover,
$(u_i-h_i)\Cj{(u_i-h_i)}$ is in $\Rt$ or $\Rs$ from which the claimed
factorization $\No{Q} = PR$ of the necessary factorization condition follows.

\section{Factorization of Star-One-Polynomials}
\label{sec:star-one-polynomials}

Inspired by Skopenkov and Krasauskas \cite{Skopenkov19} and motivated by
potential applications in kinematics, we consider polynomials $Q \in \Hstar{1}$
which admit a factorization into \emph{univariate linear factors}. An important
result, which is more or less stated in \cite{Skopenkov19}, is that the
necessary factorization condition of Definition~\ref{nec} is also sufficient for
polynomials in $\Hstar{1}$. It is not sufficient for arbitrary polynomials in
$\Qts$ by \cite[Example~1.5]{Skopenkov19} (taken from \cite{beauregard93}). If a
factorization exists, we will show that it is in general essentially unique
(that is, unique up to obvious ambiguities arising from different factorizations
of univariate polynomials). Let us continue by stating an important result of
\cite{Skopenkov19}.

\begin{thm}[{\cite[Lemma 2.9]{Skopenkov19}}]
\label{SK2}
If polynomials $Q \in \Hstar{1}$ and $P$, $R \in \Rts$ satisfy $\No{Q}=PR$, then
either $(P,Q,R)$ or $(R,Q,P)$ equals $(\No{(AC)}D,$ $ABCD,$ $\No{B}D)$ for some
$A$, $C \in \Qt$, $B \in \Qts$, $D \in \Rts$.
\end{thm}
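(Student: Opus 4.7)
The plan is to exploit the strong constraints imposed by $\deg_s Q \le 1$ to reduce bivariate factorization to the univariate theory of Theorem~\ref{thm:univ}. Write $Q = Q_0 + Q_1 s$ with $Q_0, Q_1 \in \Qt$, so that
\[
\No{Q} = Q_0\Cj{Q_0} + (Q_0\Cj{Q_1} + Q_1\Cj{Q_0})\,s + Q_1\Cj{Q_1}\,s^2,
\]
whence $\deg_s \No{Q} \le 2$ and $\deg_s P + \deg_s R \le 2$. The swap of $P$ and $R$ allowed by the theorem lets me assume $\deg_s P \le \deg_s R$ throughout.

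The case $Q_1 = 0$ reduces everything to $\Qt$ and $\Rt$: both $P$ and $R$ must lie in $\Rt$ because $\Rts$ is a UFD, and the standard univariate norm-splitting consequence of Theorem~\ref{thm:univ} yields $Q = AC$ with $\No{A} = P$ and $\No{C} = R$; setting $B = D = 1$ finishes this case.

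Now suppose $Q_1 \neq 0$. Let $E \in \Rts$ be the maximal real polynomial divisor of $Q$ in $\Qts$, and write $Q = E Q^\sharp$ with $Q^\sharp$ primitive (no nonconstant real polynomial factor), so $\No{Q} = E^2 \No{Q^\sharp} = PR$. For each real irreducible $\pi \in \Rts$ I compare the valuations $v_\pi(P)$, $v_\pi(R)$, $v_\pi(E)$, and set $D$ to be the largest divisor of $E$ that simultaneously divides both $P$ and $R$. After replacing $(P, Q, R)$ by $(P/D, Q/D, R/D)$, the problem reduces to the case $D = 1$. The degree assumption then forces $P/D \in \Rt$, and iteratively applying Theorem~\ref{thm:univ}(b) to extract left and right linear $t$-factors of $Q/D$ with norms dividing $P/D$ yields $A, C \in \Qt$ with $\No{A}\,\No{C} = P/D$ and $Q/D = ABC$ for a bivariate middle factor $B \in \Qts$ whose norm equals $R/D$ by a direct norm computation.

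The principal obstacle is the choice of $D$: the naive attempt $D = \gcd(P, R)$ in $\Rts$ fails, as the example $Q = (t + \qi)(t + \qj)$ with $P = R = t^2 + 1$ shows, where $\gcd(P, R) = t^2 + 1$ but the correct $D$ is $1$ because $Q$ has no real polynomial factor and the shared $t^2 + 1$ arises from two distinct non-real linear factors of $Q$. Thus $D$ must be read off from the real radical $E$ of $Q$ itself, not from $P$ and $R$ alone, and proving that a suitable $D$ always exists requires a prime-by-prime analysis using Theorem~\ref{thm:univ}(b) to trace how each irreducible real factor of $\No{Q}$ distributes across left and right univariate-$t$ factors of $Q$. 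A secondary subtlety is the consistency of the simultaneous left/right extraction of $A$ and $C$ around the bivariate middle $B$, which is precisely where the $\Hstar{1}$ hypothesis enters crucially: the constraint $\deg_s Q \le 1$ restricts how $s$-dependence can appear and thereby controls the compatibility of the one-sided polynomial divisions from either side.
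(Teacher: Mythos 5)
First, a point of reference: the paper does not prove this statement at all --- it is imported verbatim from Skopenkov and Krasauskas \cite[Lemma~2.9]{Skopenkov19}, with only the remark that the proof there is constructive and inductively produces linear univariate left/right factors of $Q$. Your attempt therefore has to stand on its own, and as written it does not: it is a strategy outline whose two load-bearing steps are explicitly deferred rather than carried out. The decisive gap is the ``iterative application of Theorem~\ref{thm:univ}(b)'' to $Q/D$. That theorem concerns \emph{univariate} polynomials. For a bivariate $Q = Q_0 + sQ_1$, dividing by an irreducible quadratic $M \in \Rt$ with $M \mid P$ leaves a remainder $U = u_1t + u_0$ whose coefficients $u_0$, $u_1$ lie in $\Qs$, so the candidate root $-u_1^{-1}u_0$ is a priori an $s$-dependent rational expression, not a quaternion. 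Showing that a common right factor $t-h$ with \emph{constant} $h \in \Q$ nevertheless exists --- a genuine compatibility condition between the $s^0$- and $s^1$-parts of $U$ --- is precisely the content of Lemma~2.9 and is exactly where the hypotheses $P \in \Rt$, $R \in \Rs$ and $\deg_s Q \le 1$ must do work. You flag this as a ``secondary subtlety'' and assert that $\Hstar{1}$ ``controls the compatibility,'' but no argument is given; without it there is no proof. The same holds for the existence of a valid $D$: you correctly observe that $D$ must divide the real content of $Q$ and that $\gcd(P,R)$ is the wrong choice, but you state yourself that the required prime-by-prime analysis remains to be done.

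There is also a concrete error in the base case $Q_1 = 0$: setting $B = D = 1$ forces $R = \No{B}D = 1$ and $\No{(AC)}D = \No{A}\,\No{C} = PR \neq P$, which fails already for $Q = (t+\qi)(t+\qj)$ with $P = R = t^2+1$ (your own example). The correct assignment there is, e.g., $A = t+\qi$, $B = t+\qj$, $C = D = 1$; in general $B$ must be allowed to absorb a univariate factor of norm $R/D$, and $D$ must absorb real factors shared by $P$ and $R$ through the real content of $Q$. In summary, your skeleton (split off a real $D$, put the $s$-dependence into a middle factor $B$, flank it by univariate $A$, $C$ accounting for $P$) matches the shape of the conclusion and of the constructive argument in \cite{Skopenkov19}, but the attempt establishes neither of the two facts that make the theorem true.
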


The proof of Theorem~\ref{SK2} in \cite{Skopenkov19} is constructive and can be
cast into an algorithm. As a corollary to Theorem~\ref{SK2} we will prove that
the necessary factorization condition is also sufficient for polynomials $Q \in
\Hstar{1}$.

\begin{cor}
\label{univfact}
For $Q \in \Hstar{1}$ the necessary factorization condition is also sufficient.
\end{cor}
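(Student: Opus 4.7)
My plan is to apply Theorem~\ref{SK2} to the hypothesis $\No{Q}=PR$ and then factor each piece of the resulting product $Q=ABCD$ separately, so that the four factorizations glue into a factorization of $Q$ into univariate linear factors. The trivial case $\deg_s Q=0$ reduces to $Q\in\Qt$, where Theorem~\ref{thm:univ} immediately produces the desired factorization, so I will concentrate on $\deg_s Q=1$.

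Theorem~\ref{SK2} provides $Q=ABCD$ with $A,C\in\Qt$, $B\in\Qts$, and $D\in\Rts$; up to exchanging the roles of $P$ and $R$, the relevant case will be $P=\No{AC}D$ and $R=\No{B}D$, since the alternative forces $\No{B}\in\Rt$, hence $B\in\Qt$, and therefore $Q\in\Qt$, contradicting $\deg_s Q=1$. The key reduction is then to argue that $D$ is a real constant and that $B$ is a univariate linear factor in $s$. From $P=\No{AC}D\in\Rt$ with $\No{AC}\in\Rt\setminus\{0\}$ I plan to conclude $D\in\Rt$ by reading off the coefficient of each positive power of $s$ in $D$; plugging this back into $R=\No{B}D\in\Rs$ and comparing $t$-degrees then pins down $D\in\R$. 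For $B$, the identity $\No{B}=R/D\in\Rs$ combined with the observation that the leading $t$-coefficient of $\No{B}$ equals $\No{B_m}$, where $B_m\in\Qs$ is the leading $t$-coefficient of $B$, forces $\deg_t B=0$, because $\Qs$ is a domain on which the norm vanishes only at zero. Hence $B\in\Qs$, and the bi-degree constraint $\deg_s B=1$ yields $B=B_1(s-h)$ with $B_1\in\Q\setminus\{0\}$ and $h\in\Q$.

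With these reductions in place, $A$ and $C$ factor into univariate linear $t$-factors by Theorem~\ref{thm:univ}, while $B$ and $D$ are already in the required form. A final rearrangement step, moving constants past linear factors via the relations $a(t-h_i)=(t-ah_ia^{-1})a$ and $a(s-h)=(s-aha^{-1})a$, will bring the product into the shape of~\eqref{Hs1factors}. The main obstacle I anticipate is the careful degree bookkeeping needed to pin down $D\in\R$ and $B\in\Qs$ from the $\Rt$/$\Rs$ structure of $P$ and $R$; once that is in place, everything else is either a direct invocation of Theorem~\ref{thm:univ} or routine rearrangement.
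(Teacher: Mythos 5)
Your proposal is correct and follows essentially the same route as the paper: apply Theorem~\ref{SK2}, argue that $D$ is a real constant and that $B$ is a linear polynomial in $s$ alone, and then factor the univariate pieces $A$ and $C$ via Theorem~\ref{thm:univ} before normalizing the constants. The only notable (and harmless) deviation is that you pin down $D\in\R$ and $B\in\Q[s]$ by degree bookkeeping in $P=\No{(AC)}D$ and $R=\No{B}D$, whereas the paper first reduces to the case $\gcd(Q)=1$ and deduces from that assumption that $D$ is constant; both work.
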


\begin{proof}
As shown in Section \ref{sec:preliminaries}, it is no loss of generality to
assume that $Q$ is monic with $\gcd(Q)=1$. Otherwise, we find a representation
$Q=\gcd(Q)Q'$ with $Q' \in \Hstar{1}$. Validity of the necessary factorization
condition guarantees that the real polynomial $\gcd(Q)$ can be decomposed into
univariate polynomials that admit factorizations over $\Q$ by Theorem
\ref{thm:univ}, Part~(a). Therefore, this factor is negligible
and we only have to consider polynomials without real polynomial factors of
positive degree.

Since $\No{Q} = PR$ with $P \in \Rt$ and $R \in \Rs$ we can apply
Theorem~\ref{SK2} and obtain
\begin{equation*}
	Q=ABCD \quad \text{with} \quad P=\No{(AC)}D \quad \text{and} \quad R = \No{B}D.
\end{equation*}
By the assumption $\gcd(Q) = 1$, $D$ has to be constant. Since $R \in \Rs$, we
obtain $B \in \Qs$. Moreover, $\deg B \leq 1$ because $Q \in \Hstar{1}$. Hence
$Q=ABC$ with univariate factors $A, C \in \Qt$ and $B \in \Qs$. Without loss of
generality, we may assume that $A, B$ and $C$ are monic. If one of the
polynomials, say $C$, is not monic, we write $C=cC'$, where $C' \in \Q[t]$ is
monic and $c$ is the leading coefficient of $C$. By replacing each coefficient
$a_i$ of $A$ (respectively $b_i$ of $B$) by $c^{-1}a_ic$ (respectively
$c^{-1}b_ic$) and again denoting the thus obtained polynomials by $A$ and $B$,
we find a representation $Q=cABC'$ with monic $C' \in \Q[t]$. Similar ideas can
be applied to $A$ and $B$ so that $Q=qA'B'C'$ for an appropriate $q \in \Q$ and
monic polynomials $A',C' \in \Q[t]$, $B' \in \Q[s]$. Since $Q$ is monic, we
conclude $q=1$. Factorizing $A$ and $C$ according to
Theorem \ref{thm:univ}, Part~(c) yields the desired result.
\end{proof}

\begin{rmk}
  In our short proof of Corollary~\ref{univfact} we appeal to \cite{Skopenkov19}
  and known factorization results as illustrated in Section~2.1. We would like
  to mention that already the proof of Theorem~\ref{SK2} in \cite{Skopenkov19}
  is constructive and inductively produces linear univariate left/right factors
  of~$Q$.
\end{rmk}

\section{Equivalence of Factorizations}
\label{sec:equivalence}

So far we have considered existence of factorizations. Before turning to their uniqueness or non-uniqueness we develop a sensible concept of equivalence of factorizations. An obvious source of non-uniqueness of factorizations of $Q \in \Hstar{1}$ is the potential non-uniqueness of factorizations of \emph{univariate} factors of $Q$ (c.~f. Theorem \ref{thm:univ}, Part~(c)). It seems natural to consider two factorizations obtained in this way as equivalent. Definition~\ref{def:equivalence} below provides us with a criterion for this equivalence relation which will be needed later. By $\fact(Q)$ we denote the set of all possible factorizations of $Q$ into univariate linear factors. For better readability let us introduce the following notation for elements of $\fact(Q)$:
\begin{equation*}
	\F{h}{h}{k}{n}{m} \coloneqq (t-h_1)\cdots(t-h_n)(s-h)(t-k_1)\cdots(t-k_m) \in \fact(Q)
\end{equation*}
with $h_i$, $h$, $k_j \in \Q$, $i=1,\ldots,n$, $j = 1, \ldots, m$. Note that
$\F{h}{h}{k}{n}{m}$ refers to a factorized representation of a polynomial, not
the polynomial itself. Formally, one can think of $\F{h}{h}{k}{n}{m}$ as a
$(n+1+m)$-tuple of linear polynomial factors. By virtue of the usual convention
that the value of an empty product equals one, we also write $\F{h}{h}{k}{0}{m}$
and $\F{h}{h}{k}{n}{0}$ for factorizations of the form
$(s-h)(t-k_1)\cdots(t-k_m)$ and $(t-h_1)\cdots(t-h_n)(s-h)$.

\begin{defn}
  \label{def:equivalence}
  For a monic star-one-polynomial $Q \in \Hstar{1}$ with $\gcd(Q)=1$ we consider
  the equivalence relation
\begin{equation}
\label{equiv}
\F{h}{h}{k}{n}{m} \sim \F{\ov{h}}{\ov{h}}{\ov{k}}{l}{r} :\!\iff \prod_{i=1}^{n}\No{(t-h_i)}=\prod_{i=1}^{l}\No{(t-\ov{h}_i)}
\end{equation}
on $\fact(Q)$. By $\Fact(Q) \coloneqq \fact(Q)/\sim$ we denote the corresponding
quotient set.
\end{defn}

\begin{rmk}
  Note that \eqref{equiv} implies
  \begin{equation*}
    \prod_{i=1}^{m}\No{(t-k_i)}=\prod_{i=1}^{r}\No{(t-\ov{k}_i)}.
  \end{equation*}
  Thus,
  Definition~\ref{def:equivalence} is actually symmetric in the factors to the
  left and to the right of the $s$-factor. In case of $n = 0$ or $l = 0$, the
  empty product convention applies.
\end{rmk}

As already mentioned, the equivalence relation of
Definition~\ref{def:equivalence} aims at identifying factorizations which arise
from different factorizations of the univariate polynomials
$(t-h_1)\cdots(t-h_n) \in \Qt$ and $(t-k_1)\cdots(t-k_m) \in \Qt$. This needs a
justification:

\begin{prop}
\label{eindeutigequiv}
Let $Q \in \Hstar{1}$ be a monic star-one-polynomial with $\gcd(Q) = 1$ and
consider two representatives $\F{h}{h}{k}{n}{m}$,
$\F{\ov{h}}{\ov{h}}{\ov{k}}{l}{r}$ of the same equivalence class
$[\F{h}{h}{k}{n}{m}]=[\F{\ov{h}}{\ov{h}}{\ov{k}}{l}{r}] \in \Fact(Q)$. Then
$n=l$, $m=r$, $h = \ov{h}$, $(t-h_1)\cdots(t-h_n) =
(t-\ov{h}_1)\cdots(t-\ov{h}_l)$ and $(t-k_1)\cdots(t-k_m) =
(t-\ov{k}_1)\cdots(t-\ov{k}_r)$, that is, the two factorizations arise from
different factorizations of \emph{univariate} polynomials.
\end{prop}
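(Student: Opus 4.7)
The plan is to equate the two given factorizations of $Q$, compare coefficients of $s$, and show that the remaining ambiguity between them is encoded by a single element $\lambda \in \Q(t)$ that must equal~$1$ because of the hypothesis $\gcd(Q)=1$.

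Writing $H := (t-h_1)\cdots(t-h_n)$ and $K := (t-k_1)\cdots(t-k_m)$, and defining $\ov{H}, \ov{K}$ analogously, the equation $H(s-h)K = \ov{H}(s-\ov{h})\ov{K}$ is an identity of degree-one polynomials in the central indeterminate $s$ with coefficients in $\Qt$. Comparing the coefficients of $s^1$ and $s^0$ yields the two relations $HK = \ov{H}\ov{K}$ and $HhK = \ov{H}\ov{h}\ov{K}$. The equivalence condition reads $\No H = \No{\ov{H}}$; comparing degrees gives $n = l$, and then $\deg_t Q = n+m = l+r$ yields $m = r$. Since $H, K, \ov{H}, \ov{K}$ are all monic and $t$ is central, the leading coefficient of $Q$ in $t$ equals $s-h$ from the first factorization and $s-\ov{h}$ from the second, whence $h = \ov{h}$. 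I also note that $h \notin \R$: if $h \in \R$ then $(s-h)$ would be central, $Q = HK(s-h)$, and $(s-h)$ would divide every real component of $Q$, contradicting $\gcd(Q) = 1$.

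It remains to show $H = \ov{H}$; then $K = \ov{K}$ follows by cancellation in the domain $\Qt$. In the skew field $\Q(t)$, I set $\lambda := \ov{H}^{-1}H$; note that $\lambda = \ov{K}K^{-1}$ because $HK = \ov{H}\ov{K}$. Three properties are immediate: $\lambda \to 1$ as $t \to \infty$ because $H, \ov{H}$ are monic of the same degree; $\lambda \Cj{\lambda} = \No H/\No{\ov{H}} = 1$; and $\lambda h = h\lambda$, obtained by substituting $H = \ov{H}\lambda$ and $\ov{K} = \lambda K$ into $HhK = \ov{H}h\ov{K}$. Since $h \notin \R$, the centralizer of $h$ in $\Q(t)$ is the subfield $\R[h](t) \cong \C(t)$, on which the involution $\ast$ restricts to ordinary complex conjugation. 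A standard coprime-form analysis of $\lambda\Cj{\lambda} = 1$ combined with $\lambda \to 1$ at infinity then yields $\lambda = P/\Cj{P}$ for a monic $P \in \R[h][t]$ with $\gcd(P, \Cj{P}) = 1$.

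The hardest step is to conclude $P = 1$ using $\gcd(Q) = 1$. For this I would choose $\qj \in \Q$ perpendicular to $\R[h]$ with $\qj X = \Cj{X}\qj$ for $X \in \R[h][t]$, so that $\Qt = \R[h][t] \oplus \R[h][t]\qj$. Decomposing $\ov{H}, H, K, \ov{K}$ in this splitting and using $\ov{H}\lambda = H$ and $\lambda K = \ov{K}$, the coprimality of $P$ and $\Cj{P}$ forces $\Cj{P}$ and $P$ to divide specific $\R[h][t]$-components of $\ov{H}$ and $K$. Substituting the resulting forms back into $Q = H(s-h)K$ and simplifying, one obtains an expression $Q = (P\Cj{P}) \cdot U(s-h)V$ with $U, V \in \Qt$ and the real polynomial $P\Cj{P} \in \R[t]$ as an overall scalar factor. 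Since $P\Cj{P}$ then divides every real component of $Q$, it divides $\gcd(Q) = 1$, so $P\Cj{P} = 1$ and hence $P = 1$ because $P$ is monic. Therefore $\lambda = 1$, giving $H = \ov{H}$ and $K = \ov{K}$, which completes the proof.
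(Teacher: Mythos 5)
Your proof is correct, but it follows a genuinely different route from the paper's. The paper first normalizes both factorizations so that corresponding linear factors have matching norms (using the freedom of re-factorizing the univariate pieces), and then peels off the rightmost factors one at a time: $t-k_m$ and $t-\ov{k}_m$ are right factors of $Q_1=HK=\ov{H}\ov{K}$ with the same irreducible quadratic norm $N_m$, such right factors are unique unless $N_m\mid Q_1$, and in that exceptional case one switches to $Q_0$, since $N_m$ cannot divide both without contradicting $\gcd(Q)=1$; induction finishes the argument. You instead work globally: after extracting $n=l$, $m=r$ and $h=\ov{h}$ (your leading-coefficient argument for $h=\ov{h}$ is a nice shortcut --- the paper only gets this at the very end), you package the entire ambiguity into $\lambda=\ov{H}^{-1}H\in\Q(t)$, locate it in the centralizer $\R(t)[h]\cong\C(t)$ of $h$, solve $\lambda\Cj{\lambda}=1$ in coprime form to get $\lambda=P/\Cj{P}$, and kill $P$ via the splitting $\Qt=\R[h][t]\oplus\R[h][t]\qj$. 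I checked the step you only sketch: coprimality of $P$ and $\Cj{P}$ indeed forces $K=\Cj{P}K'$ and $H=P\ov{A}'+\Cj{P}\ov{B}'\qj$, whence $H\Cj{P}=P\Cj{P}(\ov{A}'+\ov{B}'\qj)$ and $Q=P\Cj{P}\cdot U(s-h)K'$, so $P\Cj{P}\mid\gcd(Q)=1$ as you claim. Both proofs use $\gcd(Q)=1$ in the same essential way (a real factor common to $Q_0$ and $Q_1$ is forbidden), but yours trades the paper's elementary factor-peeling induction, which relies on the known uniqueness of right factors with prescribed irreducible norm, for more structural machinery (the Ore quotient skew field, centralizers, a Hilbert-90-type normalization); in return it avoids the preliminary reordering of univariate factors entirely and identifies the obstruction to uniqueness as a single norm-one element of $\C(t)$. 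If you write it up, do spell out the coprime-form analysis and the divisibility of components, and avoid reusing the symbol $\qj$ for your chosen unit anticommuting with $\IM(h)$, since it clashes with the fixed quaternion unit.
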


\begin{proof}
  It is clear that $l=n$ and $r=m$. Write
\begin{align*}
	Q = \ &\F{h}{h}{k}{n}{m}=\underbrace{(t-h_1)\cdots(t-h_n)}_{=:P_1}(s-h)\underbrace{(t-k_1)\cdots(t-k_m)}_{=:P_2}\\
	= \ &\F{\ov{h}}{\ov{h}}{\ov{k}}{n}{m}=\underbrace{(t-\ov{h}_1)\cdots(t-\ov{h}_n)}_{=:\ov{P}_1}(s-\ov{h})\underbrace{(t-\ov{k}_1)\cdots(t-\ov{k}_m)}_{=:\ov{P}_2}
\end{align*}
with $\No{P_1}=\No{\ov{P}_1}$ and $\No{P_2}=\No{\ov{P}_2}$. Without loss of
generality, we may assume $\No{(t-\ov{h}_i)}=\No{(t-h_i)}\eqqcolon M_i$ for $i \in
\{1,\ldots,n\}$ and $\No{(t-\ov{k}_j)}=\No{(t-k_j)}\eqqcolon N_j$ for $j \in
\{1,\ldots,m\}$. If that is not the case, we can compute different
factorizations of $\ov{P}_1$ and $\ov{P}_2$ where the factors appear in the
desired order (c.\,f. Theorem \ref{thm:univ}, Part~(c); note that $\gcd(Q) = 1$
implies $\gcd(\ov{P}_1) = \gcd(\ov{P}_2) = 1$).

There exist polynomials $Q_0$, $Q_1 \in \Qt$ with $Q = Q_0 + sQ_1$. We have
\begin{multline*}
  Q_1=(t-h_1)\cdots(t-h_n)(t-k_1)\cdots(t-k_m)\\=(t-\ov{h}_1)\cdots(t-\ov{h}_n)(t-\ov{k}_1)\cdots(t-\ov{k}_m),
\end{multline*}
and hence $t-k_m$ and $t-\ov{k}_m$ are right factors of $Q_1$ as well as of
$N_m$. Such right factors are uniquely determined, that is $k_m = \ov{k}_m$, as
long as $N_m \nmid Q_1$ (c.\,f. \cite[Lemma 3]{hegedus13}). If $N_m \mid Q_1$, we
pass over to
\begin{multline*}
  Q_0 = -(t-h_1) \cdots (t - h_n)h(t-k_1) \cdots (t-k_m) \\
      = -(t-\ov{h}_1) \cdots (t - \ov{h}_n)\ov{h}(t-\ov{k}_1) \cdots (t-\ov{k}_m).
\end{multline*}
If $N_m \mid Q_0$, then $N_m \mid Q$, a contradiction to $\gcd(Q) = 1$. Hence $N_m \nmid Q_0$ and -- by applying the same ideas to $Q_0$ -- we obtain $k_m=\ov{k}_m$.

Now it is possible to cancel out the factor $t-k_m$ from $Q_0$, $Q_1$ and also
from~$Q$ to obtain polynomials $Q'_0$, $Q'_1$, $Q' = Q'_0 + sQ'_1$. Proceeding
inductively with $Q'$ instead of $Q$ we obtain $k_j = \ov{k}_j$ for $j \in
\{1,\ldots,m\}$. A similar argument for left factors then yields $h_l =
\ov{h}_l$ for $l \in \{1,\ldots,n\}$. This also implies $h = \ov{h}$ and the lemma's claim follows.
\end{proof}

\section{Factorizations and Null Lines}
\label{sec:null-lines}
In this section, we assume that the monic polynomial $Q \in \Hstar{1}$ with $\gcd(Q)=1$ admits a factorization, that is
\begin{equation}
\label{factorization}
	Q=\F{h}{h}{k}{n}{m}=(t-h_1)\cdots(t-h_n)(s-h)(t-k_1)\cdots(t-k_m)
\end{equation}
with $h_1,\ldots, h_n$, $h$, $k_1,\ldots,k_m \in \Q$. In the following, we
develop a criterion to decide whether this factorization is essentially unique,
that is, whether there exists only one equivalence class of factorizations. We
call each linear polynomial $t-h_i$, $i=1,\ldots,n$, a \emph{left factor} of
factorization \eqref{factorization} since it arises as a factor on the left-hand
side of the $s$-factor $s-h$. Analogously, each linear polynomial $t-k_i$,
$i=1,\ldots,m$, is called a \emph{right factor} of factorization
\eqref{factorization}.

For $i=1,\ldots,n$ we define
\begin{equation}
\label{M}
	M_i \coloneqq \No{(t-h_i)} \in \Rt.
\end{equation}
Analogously, for $j=1,\ldots,m$ we set
\begin{equation}
\label{N}
	N_j \coloneqq \No{(t-k_j)} \in \Rt.
\end{equation}
It holds that
\begin{equation}
\label{eq:PR}
\No{Q} = PR \text{ with } P=M_1\cdots M_n N_1 \cdots N_m \text{ and } R=\No{(s-h)}.
\end{equation}
Moreover, for $i=1,\ldots,n$ and $j=1,\ldots,m$ the polynomials $M_i$ and $N_j$ are \emph{monic} and \emph{irreducible} in $\R[t]$ and $\deg M_i=2=\deg N_j$.

A polynomial $Q = Q_0 + sQ_1 \in \Hstar{1}$ gives rise to a ruled surface in
$\P(\Cq)$ which is parameterized as\footnote{Our parametric equation
  \eqref{surface} misses the curve $Q_1(t)$ and the ruling spanned by the
  leading coefficients of $Q_0$ and $Q_1$. This defect could be overcome by
  either homogenizing the polynomial and using projective parameters or by
  properly defining evaluation at parameter values $s = \infty$, $t = \infty$.
  For the sake of simplicity of notation we refrain from doing this. This will
  not affect validity of our arguments.}
\begin{equation}
\label{surface}
  \C \times \C \to \P(\Cq)\colon
  (s,t) \mapsto [Q(s,t)] = [Q_0(t) + sQ_1(t)].
\end{equation}
The $s$-parameter lines are the surface rulings. It will turn out that existence of a factorization of $Q$, its essential uniqueness and also the number of left and right factors are related to special rulings on this surface, namely rulings that lie on the \emph{null quadric}:

\begin{defn}
Consider the symmetric bilinear form $b$ defined as
\begin{equation*}
  \Cq \times \Cq \rightarrow \C\colon (h_1,h_2) \mapsto b(h_1,h_2) = \frac{1}{2}(h_1\Cj{h}_2+h_2\Cj{h}_1).
\end{equation*}
The quadric $\N \subseteq \P(\Cq)$ represented by this bilinear form is
called \emph{null quadric}. The \emph{points} of $\N$ are precisely the elements
$[h] \in \P(\Cq)$ with $b(h,h) = \No{h}=0$, that is, elements in $\P(\Cq)$
represented by complex quaternions with zero norm. Lines in $\N$ are called
\emph{null lines}.
\end{defn}

Let us fix a complex number $z \in \C=\R+\mathrm{i}\R$. For $i=1,2$, we view
$Q_i \in \Q[t]$ as an element of $\Cq[t]$ and define the \emph{evaluation}
$Q_i(z) \in \Cq$ by replacing the indeterminate $t$ by $z$. This substitution is
uncritical since $z$ commutes with all elements in $\Cq$.

Consider the projective span $\rul$ which is parameterized by
\begin{equation*}
  \C \to \P(\Cq): s \mapsto [Q_0(z) + sQ_1(z)].
\end{equation*}
The projective span $\rul$ is either a straight line or degenerates to a point
if $[Q_0(z)] = [Q_1(z)]$. Moreover, it may happen that one of the quaternions,
$Q_0(z)$ or $Q_1(z)$, equals $0$. If that is the case, we also use the notation
$\rul$ even though we actually just consider the point $[Q_1(z)]$ or $[Q_0(z)]$.

It will turn out to be advantageous to classify null lines. It is known that
the null quadric $\N$ is covered by two families of straight lines. Elements of
the first family $\mathcal{L}$ are called \emph{left rulings}, elements of the
second family $\mathcal{R}$ are called \emph{right rulings.} Any null line is
either a left ruling or a right ruling of $\N$. For more details we refer to
\cite[Theorem~8.3.2.]{casas-alvero14}. Each point $[h] \in \N$ is incident with
exactly one left ruling $L_{[h]}$ and exactly one right ruling $R_{[h]}$. In
\cite{Siegele20} it is shown that these two straight lines are given by
\begin{equation}
	\label{lrr}
		L_{[h]} \coloneqq \{[q] \in \P(\Cq): q\Cj{h}=0\} \quad \text{ and } \quad R_{[h]} \coloneqq \{[q] \in \P(\Cq): \Cj{h}q = 0\}.
\end{equation}	
The equations $q\Cj{h}=0$ and $\Cj{h}q=0$ are not equivalent since
multiplication of quaternions is not commutative. The two lines $L_{[h]}$ and
$R_{[h]}$ are indeed different.

If $z \in \C$ is a complex zero of the norm polynomial's univariate factor $P
\in \Rt$, we obtain
\begin{equation*}
  \No{(Q_0(z)+sQ_1(z))}=P(z)R = 0
\end{equation*}
and hence the projective span $\rul$ is (at least contained in) a null line.
(Note that $Q_0(z) = Q_1(z) = 0$ is not possible because of the assumption
$\gcd(Q) = 1$.) The following theorem provides a more precise geometric
interpretation for the zeros of $P$ in terms of left and right rulings. Note the
careful formulation ``contained in a left/right ruling''. It leaves open the
possibility that $\rul$ is just a point.

\begin{thm}
\label{factandrul}
Assume that a monic polynomial $Q=Q_0+sQ_1 \in \Hstar{1}$ with $\gcd(Q)=1$ admits a factorization of the form \eqref{factorization}. Moreover, let $M_i$, $i=1,\ldots,n$, $N_j$, $j=1,\ldots,m$, and $P$ be defined as in \eqref{M}, \eqref{N} and \eqref{eq:PR}. Let $z \in \C$ be a complex zero of $P$.
	\begin{enumerate}
		\item[(a)] If there exists $j \in \{1, \ldots, m\}$ such that $N_j(z)=0$, then $\rul$ is contained in a left ruling.
		\item[(b)] If there exists $i \in \{1,\ldots,n\}$ such that $M_i(z)=0$, then $\rul$ is contained in a right ruling.
	\end{enumerate}
\end{thm}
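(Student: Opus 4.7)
The plan is, in each part, to rewrite the given factorization so that a linear factor of the relevant norm ($N_j$ in part~(a), $M_i$ in part~(b)) sits at the outermost position, and then to read off an explicit null element of $\Cq$ that annihilates both $Q_0(z)$ and $Q_1(z)$ from the appropriate side. Concretely, set $A(t) := (t-h_1)\cdots(t-h_n)$ and $C(t) := (t-k_1)\cdots(t-k_m)$; since $s$ commutes with every $t$-factor and with $h$, expansion of \eqref{factorization} yields $Q_1 = AC$ and $Q_0 = -AhC$. By the description~\eqref{lrr} of left and right rulings, the task reduces to producing a non-zero $h^\star \in \Cq$ with $\No{h^\star} = 0$ that annihilates $Q_0(z)$ and $Q_1(z)$ from the right (for part~(a)) or from the left (for part~(b)).

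For part~(a), Theorem~\ref{thm:univ}(c) lets one replace $C(t)$ by an equivalent factorization in which a linear factor $(t - \tilde{k}_j)$ of norm $N_j$ appears at the rightmost position, so $C(t) = C'(t)(t - \tilde{k}_j)$ for some $C' \in \Qt$. Setting $h^\star := z - \tilde{k}_j$ makes $\No{h^\star} = N_j(z) = 0$, and the case $h^\star = 0$ is excluded by irreducibility of $N_j$ over $\R$. Multiplying $Q_1(z) = A(z) C'(z)(z - \tilde{k}_j)$ and $Q_0(z) = -A(z)\,h\,C'(z)(z - \tilde{k}_j)$ on the right by $\Cj{h^\star} = z - \Cj{\tilde{k}_j}$ then produces the internal factor $N_j(z) = 0$, and both expressions vanish, placing $[Q_0(z)]$ and $[Q_1(z)]$ in $L_{[h^\star]}$. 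Hence $\rul \subseteq L_{[h^\star]}$. Part~(b) is the mirror image: Theorem~\ref{thm:univ}(c) moves a linear factor $(t - \tilde{h}_i)$ of norm $M_i$ to the leftmost position of $A(t)$; the same computation then shows that $\Cj{h^\star}$ with $h^\star := z - \tilde{h}_i$ left-annihilates both $Q_0(z)$ and $Q_1(z)$, and $\rul$ is contained in $R_{[h^\star]}$.

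The only step with genuine content is the reordering of the univariate factorization, for which I invoke Theorem~\ref{thm:univ}; everything else is straightforward algebra using the fact that the complex scalar $z$ and the real scalar $N_j(z)$ (resp.\ $M_i(z)$) commute with everything in $\Cq$. A minor bookkeeping check concerns the degenerate case in which $Q_0(z)$ or $Q_1(z)$ vanishes: then $\rul$ reduces to a single projective point, which still lies on the identified null ruling, so the conclusion survives. Simultaneous vanishing of both is ruled out by $\gcd(Q) = 1$.
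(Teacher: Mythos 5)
Your proof is correct, and it reaches the conclusion by a mildly different route than the paper. The paper never reorders the factorization: it establishes the polynomial identity
\begin{equation*}
  Q_1\Cj{Q_0} = -(t-h_1)\cdots(t-h_n)\,\Cj{h}\,\Cj{(t-h_n)}\cdots\Cj{(t-h_1)}\,N_1\cdots N_m
\end{equation*}
by pushing the central real factors $(t-k_j)\Cj{(t-k_j)}=N_j$ out of the product, evaluates at $z$, and reads off from \eqref{lrr} that $[Q_1(z)]$ lies on the left ruling $L_{[Q_0(z)]}$ through $[Q_0(z)]$; the degenerate case where $\rul$ is a single point is dismissed at the outset, since every point of $\N$ lies on some left ruling. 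You instead invoke Theorem~\ref{thm:univ} to bring a factor of norm $N_j$ to the rightmost position and exhibit an explicit third null point $[h^\star]=[z-\tilde{k}_j]$ whose left ruling contains both $[Q_0(z)]$ and $[Q_1(z)]$. This costs one extra ingredient --- the reordering of the univariate factorization, which is legitimate here because $\gcd(Q)=1$ forces $\gcd\bigl((t-k_1)\cdots(t-k_m)\bigr)=1$, so the required right factor of norm $N_j$ exists by Theorem~\ref{thm:univ}, Part~(b) --- but it buys a uniform treatment of the degenerate cases and an explicit identification of the ruling rather than only of a point on it. Your auxiliary checks (that $h^\star\neq 0$ because $z\notin\R$ while $\tilde{k}_j$ has real components, and that $Q_0(z)=Q_1(z)=0$ is excluded by $\gcd(Q)=1$) are exactly the ones needed.
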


\begin{proof}
	Let us first consider Part~(a). There is nothing to show if $\rul$ is just a
  point. Hence, we can assume that $[Q_0(z)]$ and $[Q_1(z)]$ are two distinct
  points. We want to show that $[Q_1(z)]$ lies on the unique left ruling through
  $[Q_0(z)]$. By the defining condition \eqref{lrr} for left rulings, this is
  equivalent to $Q_1(z)\Cj{Q_0}(z)=0$ (we use the denotation
  $\Cj{Q_0}(z)=\Cj{(Q_0(z))}$).
	
  Because $Q$ admits a factorization of the form \eqref{factorization}, we have
\begin{equation}
\begin{aligned}
\label{Q0Q1}
	Q_0&=- (t-h_1)\cdots(t-h_n)h(t-k_1)\cdots(t-k_m),\\
	Q_1&=(t-h_1)\cdots(t-h_n)(t-k_1)\cdots(t-k_m)
\end{aligned}
\end{equation}
(we already used these representations in the proof of
Proposition~\ref{eindeutigequiv}). Hence
\begin{equation}
\label{Q1Cj(Q0)}
  \begin{aligned}
	Q_1\Cj{Q_0} = \ - \ &(t-h_1)\cdots(t-h_n)(t-k_1)\cdots(t-k_m)\\ &\Cj{(t-k_m)}\cdots \Cj{(t-k_1)}\Cj{h}\Cj{(t-h_n)}\cdots \Cj{(t-h_1)}\\
	= \ - \ & (t-h_1)\cdots (t-h_n)\Cj{h} \Cj{(t-h_n)}\cdots \Cj{(t-h_1)}N_1\cdots N_m.
  \end{aligned}
\end{equation}
Evaluating at $z$ yields $Q_1(z)\Cj{Q_0}(z) = 0$ since $N_j(z) = 0$. Similarly,
one can show that $M_i(z) = 0$ implies $\Cj{Q_0}(z)Q_1(z) = 0$ which proves
Part~(b) of the theorem.
\end{proof}

Theorem \ref{factandrul} can be interpreted as follows: Each single left factor
of factorization \eqref{factorization} gives rise to two conjugate complex
parameter values $z$, $\overline{z} \in \C$ with $\rul$ (resp.
$[Q_0(\overline{z})] \vee [Q_1(\overline{z})]$) being contained in a right
ruling. Similarly, right factors of \eqref{factorization} lead to (points on)
left rulings.

The algebraic criterion for $\rul$ being contained in a left/right ruling is the
vanishing of $Q_1(z)\Cj{Q_0}(z)$ and $\Cj{Q_0}(z)Q_1(z)$, respectively. Denote by
\begin{equation}
  \label{AB}
  A \coloneqq \{z \in \C: Q_1(z)\Cj{Q_0}(z) = 0\}\text{ and }
  B \coloneqq \{z \in \C: \Cj{Q_0}(z)Q_1(z) = 0\}
\end{equation}
the sets of complex zeros of the polynomials $Q_1\Cj{Q_0}$ and $\Cj{Q_0}Q_1$. We
say that the multiplicity $\m{H}{z}$ of $z \in \C$ as a zero of $H \in \Q[t]$
equals $\mu$ if $(t-z)^\mu$ divides $H$ in $\Cq[t]$ and $(t-z)^{\mu+1}$ does
not. Since $(t-z)$ is part of the center of $\Cq[t]$, we need not distinguish
between left- and right-division.

Let us briefly explain why $H(z)=0$ is equivalent to $(t-z)$ dividing $H$ in
$\Cq[t]$: Write $H=H_0+\qi H_1+ \qj H_2 + \qk H_3$, where $H_i \in \R[t]$ for
$i=0,1,2,3$. The fact $H(z)=0$ implies $H_i(z)=0$. Since $H_i \in \R[t]$ is a
real polynomial, we can find $H_i' \in \C[t]$ such that $H_i=H_i'(t-z)$, whence
$H=H'(t-z)$ with $H'=H_0'+\qi H_1' + \qj H_2' + \qk H_3'$. If $(t-z)$ divides
$H$ in $\Cq[t]$, it also divides $H_i$ in $\C[t]$, which implies $H_i(z)=0$ and
hence $H(z)=0$.

Write $\lambda(z) \coloneqq \m{Q_1\Cj{Q_0}}{z}$ and $\varrho(z) \coloneqq
\m{\Cj{Q_0}Q_1}{z}$. We then define the \emph{multiplicity cardinalities}
\begin{equation}
  \label{mult-card}
  \# A \coloneqq \sum_{z \in A} \lambda(z),\quad
  \# B \coloneqq \sum_{z \in B} \varrho(z).
\end{equation}

\begin{rmk}
\label{rmk:lowerbound}
Under the assumptions of Theorem~\ref{factandrul} we define $N \coloneqq N_1\cdots N_m$ and $M \coloneqq M_1\cdots M_n$. From equation \eqref{Q1Cj(Q0)} it follows that
\begin{equation*}
  \lambda(z) \geq \m{N}{z} \text{ for } z \in A
\end{equation*}
and similarly
\begin{equation*}
  \varrho(z) \geq \m{M}{z} \text{ for } z \in B.
\end{equation*}
Moreover, the $\deg N=2m$ complex zeros of $N$ (counted with multiplicities) are elements of $A$ and the $\deg M=2n$ complex zeros of $M$ (counted with multiplicities) are elements of $B$, which shows that the multiplicity cardinalities $\#A$ and $\#B$ are bounded from below by
\begin{equation}
  \label{eq:A2mB2n}
  \# A \ge 2m \quad \text{and} \quad \# B \ge 2n.
\end{equation}
\end{rmk}

The lower bounds \eqref{eq:A2mB2n} need not be strict:

\begin{example}
For $Q = (t-\qi)(s-\qj)(t-\qj)$ we have
\begin{equation*}
  Q_1\Cj{Q_0}=(t^2 + 1)(\qj t^2 - 2\qk t - \qj) \quad \text{and} \quad \Cj{Q_0}Q_1=\qj(t^2 + 1)^2.
\end{equation*}
Therefore, $m = n = 1$, $A = B =
  \{\pm\mathrm{i}\}$, $\lambda(\pm \mathrm{i}) = 1$, $\varrho(\pm \mathrm{i}) =
  2$ and hence $\#A = 2 = 2m$ but $\#B = 4 > 2n$.
\end{example}

It is natural to relate the algebraic multiplicities $\lambda(z)$ and
$\varrho(z)$ to multiplicities of left/right rulings on the ruled surface $Q$.

Via the Klein mapping $\gamma$ (c.\,f. \cite[Section~2.1]{Pottmann10}), straight
lines in $\P^3$ are mapped to points of the \emph{Plücker quadric} in $\P^5$.
Ruled surfaces are mapped to curves such that the intersection multiplicity of
ruled surfaces at lines can be based on the concept of intersection multiplicity
of curves from projective differential geometry. If $\rul$ is a straight line,
then $\gamma(\rul)$ is a point on the Plücker quadric. Moreover, the Klein images $\gamma(\mathcal{L})$, $\gamma(\mathcal{R})$ of
left/right rulings are conics on the Plücker quadric. Provided $\rul$ is a
straight line, we can therefore consider the intersection of the rational curve
$\gamma(Q)$ with $\gamma(\mathcal{L})$ (or with $\gamma(\mathcal{R})$) at the
point $\gamma(\rul)$ and compute its \emph{intersection multiplicity}. One can show that $\gamma(Q)$ intersects $\gamma(\mathcal{L})$ with intersection multiplicity $\mu$ in $\rul$ if and only if $(t-z)^\mu$ is a factor of $Q_1\Cj{Q_0}$, that is, $\mu = \lambda(z)$ (c.~f. \cite[Proof~of~Theorem~3]{Siegele20}). If $\rul$ is just a point, one can still compute the algebraic multiplicities $\lambda(z)$ and $\varrho(z)$, but the geometric interpretation of multiplicities in terms of left/right rulings is difficult to sustain.

\section{Uniqueness of Factorizations}
\label{sec:uniqueness}

With the help of the multiplicity cardinalities $\#A$ and $\#B$ we are now able to state a condition which guarantees essential
uniqueness of a factorization of $Q$.

\begin{thm}[Uniqueness Theorem]
  \label{unique-alg}
  Let $Q=Q_0+sQ_1 \in \Hstar{1}$, $\gcd(Q)=1$ and $\No{Q} = PR$ with $P \in \R[t]$ and $R \in \R[s]$. Moreover, let $\#A$ and $\#B$ be the multiplicity cardinalities defined in \eqref{mult-card}. Then $\deg P = \#A + \#B$ implies $\vert\Fact(Q)\vert = 1$.
\end{thm}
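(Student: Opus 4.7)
The plan is to extract sharp structural information from the equality $\deg P = \#A + \#B$ and use it to rule out non-equivalent factorizations of $Q$. Since $\No{Q} = PR$, Corollary~\ref{univfact} guarantees $\fact(Q) \neq \emptyset$; fix any factorization $F = \F{h}{h}{k}{n}{m}$. From $P = M_1 \cdots M_n N_1 \cdots N_m$, a product of $n + m$ real quadratic polynomials, I read off $\deg P = 2(n+m)$. Combining this with the lower bounds $\#A \geq 2m$ and $\#B \geq 2n$ of Remark~\ref{rmk:lowerbound} and the hypothesis, the chain $2(n+m) = \#A + \#B \geq 2m + 2n = 2(n+m)$ collapses, so $\#A = 2m$ and $\#B = 2n$ exactly.

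Now consider any other factorization $\hat F = \F{\hat h}{\hat h}{\hat k}{\hat n}{\hat m} \in \fact(Q)$. The identity $\hat n + \hat m = \deg_t Q_1 = n + m$ together with the bounds $\#A \geq 2\hat m$ and $\#B \geq 2\hat n$ obtained from Remark~\ref{rmk:lowerbound} applied to $\hat F$ forces $\hat m \leq m$ and $\hat n \leq n$, so $\hat n = n$ and $\hat m = m$. I would then saturate a second time to pin down the real polynomial $N = N_1 \cdots N_m$. Because each zero of $N$ lies in $A$ with $\lambda(z) \geq \m{N}{z}$, the estimate
\begin{equation*}
  2m = \deg N = \sum_{N(z)=0} \m{N}{z} \leq \sum_{N(z)=0} \lambda(z) \leq \sum_{z \in A} \lambda(z) = \#A = 2m
\end{equation*}
collapses to a chain of equalities, forcing both $A = \{z \in \C : N(z) = 0\}$ and $\lambda(z) = \m{N}{z}$ for every $z \in A$. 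The analogous calculation for $\hat F$ yields $A = \{z : \hat N(z) = 0\}$ and $\lambda(z) = \m{\hat N}{z}$, so the monic polynomials $N, \hat N \in \R[t]$ share their zero sets and multiplicities; therefore $N = \hat N$. The symmetric argument using $\#B = 2n$ yields $M = \hat M$.

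The equality $M = \hat M$, i.e.\ $\prod_{i=1}^{n} \No{(t-h_i)} = \prod_{i=1}^{n} \No{(t-\hat h_i)}$, is precisely condition~\eqref{equiv} defining the equivalence relation on $\fact(Q)$, so $F \sim \hat F$ and $|\Fact(Q)| = 1$. The main subtlety I anticipate is the \emph{double} saturation argument: the hypothesis first collapses the inequality $\#A + \#B \geq 2m + 2n$ into individual equalities, and then each of those equalities must be collapsed again through a sum over the zeros of $N$ (respectively $M$) to promote the pointwise inequality $\lambda(z) \geq \m{N}{z}$ to an equality. Once both rounds of saturation are secured, $N$ and $M$ are determined purely by the invariants $A$, $B$, $\lambda$, $\varrho$ of $Q$, and equivalence of any two factorizations follows automatically.
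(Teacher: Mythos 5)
Your proof is correct and follows essentially the same route as the paper: both arguments rest on the lower bounds of Remark~\ref{rmk:lowerbound} and on the observation that the saturation $\#A=2m$, $\#B=2n$ forces $\lambda(z)=\m{N}{z}$ and $\varrho(z)=\m{M}{z}$, so that the norm polynomials $M$ and $N$ of the left and right parts are determined by $Q$ alone. The only difference is presentational: you identify $M$ and $N$ directly as invariants of $Q$ via the double saturation, whereas the paper assumes a second non-equivalent factorization and derives the contradiction $\varrho(z)\geq\m{\ov{M}}{z}>\m{M}{z}=\varrho(z)$ from the identity $\Cj{Q_0}Q_1=HM$.
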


\begin{proof}
  The assumptions guarantee existence of one factorization $\F{h}{h}{k}{n}{m}$ of $Q$ of shape \eqref{factorization}. Hence $\deg P= 2(m+n)$ and the equality $\deg P=\# A+\# B$ implies $\# A=2m$ and $\# B=2n$. (Note that $\# A \ge 2m$ and $\# B \ge 2n$ is always satisfied by Remark~\ref{rmk:lowerbound}.)

Suppose there exists a second factorization
\begin{equation}
  \label{factorization3}
	Q=\F{\ov{h}}{\ov{h}}{\ov{k}}{l}{r}=(t-\ov{h}_1)\cdots(t-\ov{h}_l)(s-\ov{h})(t-\ov{k}_1)\cdots(t-\ov{k}_r)
\end{equation}
such that $[\F{h}{h}{k}{n}{m}] \neq [\F{\ov{h}}{\ov{h}}{\ov{k}}{l}{r}]$. By definition of (non)-equivalence,
\begin{equation}
\label{irred1}
	M \coloneqq \prod_{i=1}^n \No{(t-h_i)} \neq \prod_{i=1}^l \No{(t-\ov{h}_i)} \eqqcolon \ov{M}
\end{equation}
and also
\begin{equation}
\label{irred2}
  N \coloneqq \prod_{i=1}^{m}\No{(t-k_i)} \neq \prod_{i=1}^{r}\No{(t-\ov{k}_i)} \eqqcolon \ov{N}
\end{equation}
(we already defined the polynomials $M$ and $N$ in Remark \ref{rmk:lowerbound}).
From \eqref{irred1} and \eqref{irred2} one already obtains the decompositions of
$M$, $\ov{M}$, $N$ and $\ov{N}$ into monic irreducible real factors. There are
two possible cases which ought to lead to the inequality $M \neq \ov{M}$:

\emph{Case 1:} There exists an irreducible factor $\ov{M}_i \coloneqq
\No{(t-\ov{h}_i)}$ of $\ov{M}$ which occurs with higher power in the irreducible
factor decomposition of $\ov{M}$ than in the one of $M$. More precisely, if $z
\in \C$ is a zero of $\ov{M}_i$, we have $\m{M}{z}<\m{\ov{M}}{z}$.

Let us again use representation \eqref{Q0Q1} for the polynomials $Q_0$ and
$Q_1$. Then we have $\Cj{Q_0}Q_1 = HM$ where
\begin{equation}
\label{H}
  H = -\Cj{(t-k_m)}\cdots \Cj{(t-k_1)}\Cj{h}(t-k_1)\cdots(t-k_m).
\end{equation}
For each $z \in B$ it holds that $H(z) \neq 0$. Indeed, $H(z)=0$ would imply
$\# B > 2n$ since each of the $\deg M=2n$ complex zeros of $M$ (counted with
multiplicities) is an element of $B$ (c.~f. Remark \ref{rmk:lowerbound}). Therefore, one actually obtains $\varrho(z) = \m{M}{z}$.

Let us cause a contradiction by using factorization \eqref{factorization3}: This factorization yields another description of the polynomials $Q_0$ and $Q_1$:
\begin{equation*}
  \begin{aligned}
    Q_0 &= -(t-\ov{h}_1)\cdots(t-\ov{h}_l)\ov{h}(t-\ov{k}_1)\cdots(t-\ov{k}_r),\\
    Q_1 &= (t-\ov{h}_1)\cdots(t-\ov{h}_l)(t-\ov{k}_1)\cdots(t-\ov{k}_r).
  \end{aligned}
\end{equation*}
We obtain $\Cj{Q_0}Q_1 = \ov{H}\ov{M}$ where
\begin{equation*}
\ov{H}=-\Cj{(t-\ov{k}_r)}\cdots\Cj{(t-\ov{k}_1)}\Cj{\ov{h}}(t-\ov{k}_1)\cdots(t-\ov{k}_r),
\end{equation*}
which implies the contradiction $\varrho(z) \geq
\m{\ov{M}}{z}>\m{M}{z}=\varrho(z)$.

\emph{Case 2:} There exists an irreducible factor $M_i \coloneqq \No{(t-h_i)}$ of $M$ and $z \in \C$ such that $\m{\ov{M}}{z}<\m{M}{z}$. But then $\m{\ov{N}}{z}>\m{N}{z}$ and we can proceed analogously by using the representation $Q_1\Cj{Q_0}=KN$ with
\begin{equation*}
  K = -(t-h_1)\cdots (t-h_n)\Cj{h}\Cj{(t-h_n)}\cdots\Cj{(t-h_1)}
\end{equation*}
and the fact that $\# A=2m$.
\end{proof}

\begin{rmk}
The converse of Theorem \ref{unique-alg} is also true. If
$|\Fact(Q)|=1$, we obtain $\deg(P)=\#A+\#B$. However, this is not yet obvious
and we need further preparations to be able to formulate and prove the statement
(c.f. Theorem~\ref{nonunique-alg} in Section~\ref{sec:non-uniqueness}).
	
\end{rmk}

\begin{cor}
\label{unique-geom}
Suppose that $Q \in \Hstar{1}$ satisfies the assumptions of Theorem \ref{unique-alg}. If $\rul$ is a straight line for all zeros $z \in \C$ of the norm
polynomial's univariate $t$-factor $P \in \R[t]$, then $\vert\Fact(Q)\vert=1$.
\end{cor}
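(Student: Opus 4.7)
The plan is to verify the hypothesis of the Uniqueness Theorem~\ref{unique-alg}, namely $\deg P=\#A+\#B$. Corollary~\ref{univfact} already supplies a factorization of $Q$ of the shape~\eqref{factorization} so that $\deg P=2(n+m)$, and Remark~\ref{rmk:lowerbound} gives the lower bound $\#A+\#B\ge 2m+2n=\deg P$. The task reduces to proving the reverse inequality. I would do this by showing, first, that $A,B\subseteq\{z\in\C\colon P(z)=0\}$, and, second, the sharp pointwise identity $\lambda(z)+\varrho(z)=\m{P}{z}$ at every complex zero $z$ of $P$.

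For the algebraic preliminaries, with $L\coloneqq(t-h_1)\cdots(t-h_n)$ and $U\coloneqq(t-k_1)\cdots(t-k_m)$, the representations $Q_0=-LhU$, $Q_1=LU$ together with the factorizations $Q_1\Cj{Q_0}=KN$ and $\Cj{Q_0}Q_1=HM$ (used already in the proof of Theorem~\ref{unique-alg}) yield by direct calculation the norm identities $\No{Q_0}=\No{h}\,P$, $\No{Q_1}=P$, $\No{K(z)}=M(z)^2\No{h}$, and $\No{H(z)}=N(z)^2\No{h}$. A degenerate $h=0$ would force $Q_0=0$, hence $\rul$ to be a point, not a line, at every zero of $P$; this contradicts the hypothesis unless $\deg P=0$, in which case the conclusion is immediate. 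I would therefore assume $h\neq 0$, so that $\No{Q_1\Cj{Q_0}}=\No{h}\,P^2$ immediately yields $A,B\subseteq\{z:P(z)=0\}$.

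The main step is the pointwise identity. Fix a zero $z$ of $P$. Since $\No{Q_0(z)+sQ_1(z)}=P(z)R(s)\equiv 0$, the set $\rul$ is contained in $\N$, and by hypothesis it is a proper line. On the smooth quadric $\N$ the two ruling families $\Lr$ and $\Rr$ are disjoint, so $\rul$ belongs to exactly one of them. By Theorem~\ref{factandrul}, the vanishing of $N$ at $z$ forces $\rul\in\Lr$ and the vanishing of $M$ at $z$ forces $\rul\in\Rr$; hence $M$ and $N$ cannot both vanish at $z$. If $N(z)=0$ and $M(z)\neq 0$, then $z\in A\setminus B$, giving $\varrho(z)=0$, while $\No{K(z)}=M(z)^2\No{h}\neq 0$ forces $K(z)\neq 0$ and thus $\lambda(z)=\m{K}{z}+\m{N}{z}=\m{N}{z}=\m{P}{z}$. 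The case $M(z)=0$, $N(z)\neq 0$ is symmetric. Either way $\lambda(z)+\varrho(z)=\m{P}{z}$. Summing over all complex zeros of $P$ gives $\#A+\#B=\deg P$, and Theorem~\ref{unique-alg} closes the argument.

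The principal obstacle is the geometric step that translates ``$\rul$ is a proper null line'' into the clean algebraic disjunction that $z$ is a zero of exactly one of $M$, $N$. Once this disjointness of the two null-line families on $\N$ is in force, the norm identities for $K$ and $H$ together with Theorem~\ref{factandrul} handle the bookkeeping almost automatically.
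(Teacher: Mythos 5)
Your proof is correct and follows essentially the same route as the paper: the hypothesis that $\rul$ is a straight line forces $A$ and $B$ to be disjoint (a null line cannot be both a left and a right ruling), which yields $\deg P=\#A+\#B$ and hands the conclusion to Theorem~\ref{unique-alg}. The paper states the multiplicity identity $\deg P=2(m+n)=\#A+\#B$ without elaboration, whereas you make the bookkeeping explicit via the norm identities for $K$ and $H$; this is a welcome expansion of the same argument rather than a different one.
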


\begin{proof}
  Since $\rul$ is a straight line for all zeros $z$ of $P$, the sets $A$ and $B$ are disjoint. Therefore, each zero of $P$ is \emph{either} contained in $A$ \emph{or} in $B$. More precisely,
  \begin{equation*}
    \deg P=2(m+n)=\# A+ \#B
  \end{equation*}
  and the statement follows from Theorem~\ref{unique-alg}.
\end{proof}

Theorem \ref{unique-alg} provides us with a sufficient test whether a
factorization of $Q$ is essentially unique. We can compute the multiplicity
cardinalities $\#A$ and $\#B$ of the sets $A$ and $B$, respectively, whence
$\deg P = \#A + \#B$ implies $\vert\Fact(Q)\vert = 1$.

The elements of $A$ and $B$ are the complex numbers $z \in \C$ determined by
\eqref{AB}, the multiplicity cardinalities are the sums of multiplicities
$\lambda(z) = \m{Q_1\Cj{Q_0}}{z}$ and $\varrho(z) = \m{\Cj{Q_0}Q_1}{z}$ for $z
\in A$ and $z \in B$, respectively. We illustrate this at hand of a couple of
examples. The first example demonstrates that the converse of
Corollary~\ref{unique-geom} is not true.

\begin{example}
\label{ex4}
  For
  \begin{equation*}
    Q = (-\qi t - \qj - \qk t  + t^2)s + \qi t - \qj t^2 + \qk t + 1
  \end{equation*}
  we have $A = B = \{\pm \mathrm{i} \}$ and $\lambda(\pm
  \mathrm{i}) = \varrho(\pm \mathrm{i}) = 1$. Hence $\deg P=4=\#A+\#B$ where $P \in \Rt$ again denotes the norm polynomial's univariate $t$-factor. The factorization is essentially unique by Theorem~\ref{unique-alg}. It is
  given by $Q = (t - \qi) (s - \qj) (t - \qk)$ (and trivially unique because we
  only have one left factor and one right factor). However,
  \begin{equation*}
    Q_0(\pm\mathrm{i}) = 1 \pm \qi\mathrm{i} + \qj \pm \qk\mathrm{i} = -Q_1(\pm\mathrm{i})
  \end{equation*}
  so that $[Q_0(\pm\mathrm{i})] \vee [Q_1(\pm\mathrm{i})]$ is just a point.
\end{example}

\begin{example}
The norm of the polynomial
\begin{equation*}
  \begin{aligned}
	Q \coloneqq \ &(\qi(2t^3 + t^2 - 112t + 90) + \qj(t^3 + 11t^2 - 7t - 156)\\
	 + \ &\qk(-4t^3 + 36t^2 - 87t + 18) + t^4 - 9t^3 + 37t^2 - 80t)s\\
	 + \ &\qi(-9t^4 + 88t^3 - 573t^2 + 1136t + 1260)\\
   + \ &\qj(7t^4 - 52t^3 + 217t^2 - 1310t + 1092)\\
	 + \ &\qk(-3t^4 + 32t^3 + 49t^2 - 1100t + 1344) - t^4 + 8t^3 - 23t^2 - 258t
  \end{aligned}
\end{equation*}
equals
\begin{equation*}
	\No{Q} = \underbrace{(t^2 + 2)(t^2 - 2t + 15)(t^2 - 10t + 28)(t^2 - 6t + 39)}_{=:P}\underbrace{(s^2 - 2s + 140)}_{=:R},
\end{equation*}
which shows that the necessary factorization condition is satisfied. Hence $Q$
admits a factorization into univariate linear factors. By computing $A$ and $B$
as defined in \eqref{AB}, we see that $\#A = 4$ and $\#B = 4$. In total, $\deg P = 8 = \#A+\#B$, which shows that the
factorization is essentially unique and the position of the $s$-factor is
unique. Moreover, the fact $\#A=4$ and $\#B=4$ implies that any factorization of $Q$ is of the form
\begin{equation*}
	Q=(t-h_1)(t-h_2)(s-h)(t-k_1)(t-k_2)
\end{equation*}
for appropriate quaternions $h_1$, $h_2$, $h$, $k_1$, $k_2 \in \Q$ and all
factorizations of $Q$ are obtained from all factorizations of the univariate
polynomials $(t-h_1)(t-h_2)$ and $(t-k_1)(t-k_2)$.
\end{example}

\begin{example}
\label{examplemult}
Consider the polynomial
\begin{equation*}
  Q \coloneqq (-2\qi t^2 - \qk(t^2 + 1) + t^3 - t)s + \qi(t^2 - 1) - \qj(t^3 + t) + 2t.
\end{equation*}
Its norm polynomial factors as
\begin{equation*}
  \No{Q} = PR = (t^2 + 1)^3(s^2 + 1),
\end{equation*}
whence $\deg P=6$. Moreover, $A=B=\{\pm \mathrm{i}\}$ and $\#A = 4$,
$\#B=2$. We therefore obtain a factorization with one left factor and two right factors.
The two right factors share the same norm polynomial, whence the factorization
is even unique and not just essentially unique (c.~f. Section
\ref{sec:univariate}). It is given by
\begin{equation*}
  Q = (t-\qi)(s-\qj)(t-\qk)(t-\qi).
\end{equation*}
\end{example}

\begin{example}
\label{ex2}
The polynomial
\begin{multline*}
		Q \coloneqq (\qi (2-t) + \qj (2t^2 - 6t + 5) - \qk t + t^3 - 4t^2 + 5t - 1)s\\
		+ \qi (-t^3 + 4t^2 - 8t + 5) + \qj (-t^3 + 4t^2 - 4t - 1) + \qk (t - 3) + 2t^2 - 7t + 5
\end{multline*}
satisfies the necessary factorization condition
\begin{equation*}
	\No{Q} = PR = (t^2 - 2t + 2)(t^2 - 2t + 3)(t^2 - 4t + 5)(s^2 + 2).
\end{equation*}
The multiplicity cardinalities are $\#A=4$, $\#B=4$ whence $\deg P =6
<\#A+\#B$. Indeed, we will see that this property implies existence of
non-equivalent factorizations (c.~f. Example~\ref{ex2b}).
\end{example}

\section{Non-Uniqueness of Factorizations}
\label{sec:non-uniqueness}

A polynomial $Q \in \Hstar{1}$ admits a factorization if it satisfies the
factorization condition of Definition~\ref{nec}. By Theorem~\ref{unique-alg} the
factorization is in general essentially unique, that is, the set $\Fact(Q)$ of
equivalence classes consists of only one element. However, exceptions do exist.
We study this in the simple case of $Q \in \Q_{11}$.

\begin{example}
  Consider the polynomial $Q = (t-h)(s-k) \in \Q_{11}$ with $h$, $k \in \Q$. A
  second factorization is necessarily of the shape $Q = (s-\ov{k})(t-\ov{h})$.
  Equating and expanding gives
	\begin{equation*}
		(t-h)(s-k) = t\,s - h\,s - k\,t + h\,k = t\,s - \ov{h}\,s - \ov{k}\,t + \ov{k}\,\ov{h} = (s-\ov{k})(t-\ov{h}).
	\end{equation*}
  Now we compare coefficients and find $h = \ov{h}$, $k = \ov{k}$ and $hk = \ov{k}\ov{h} = kh$.
  Thus, a second factorization exists if and only if $h$ and $k$ commute.
\end{example}

Building on this example we see that if a polynomial $Q \in \Hstar{1}$ admits the factorization
\begin{equation*}
  Q = (t-h_1)\cdots(t-h_n)(s-h)(t-k_1)\cdots(t-k_m),
\end{equation*}
then commutativity of $h_n$ and $h$ (or $h$ and $k_1$) implies existence of a
second, non-equivalent, factorization. However, one may find examples where
non-equivalent factorizations do not arise in this simple way, at least not obviously.

\begin{example}
\label{ex2b}
We again consider the polynomial $Q$ of Example~\ref{ex2}. It satisfies the
necessary factorization condition. By Corollary \ref{univfact}, $Q$ completely
decomposes into univariate linear factors, for instance:
\begin{align}
\label{fact1}
	Q=(t - 2 - \qi)(s-\qi-\qj)\left(t + \frac{4\qi}{5} + \frac{3\qj}{5} - 1\right)\left(t + \frac{\qi}{5} + \frac{7\qj}{5} - 1\right)
\end{align}
Another factorization of $Q$ is given by
\begin{align}
\label{fact2}
	Q=(t+\qj+\qk-1)(t-\qk-2)(s-\qi-\qj)(t+\qj-1).
\end{align}
Still, $s-\qi-\qj$ does not commute with any $t$-factor in \eqref{fact1} or
\eqref{fact2}. Obviously, the two factorizations are not equivalent.
\end{example}

As we have seen in Section \ref{sec:uniqueness}, non-uniqueness of
factorizations of $Q$ is only possible if $\deg P < \#A + \#B$ where $P$ denotes the norm polynomial's univariate $t$-factor and $\#A$ and $\#B$ are the multiplicity cardinalities. It turns out
that this necessary condition for existence of non-equivalent factorizations is
also sufficient.

\begin{thm}[Non-Uniqueness Theorem]
\label{nonunique-alg}
Let $Q=Q_0+sQ_1 \in \Hstar{1}$, $\gcd(Q)=1$ and $\No{Q} = PR$ with $P \in \R[t]$ and $R \in \R[s]$. Moreover, let $\#A$ and $\#B$ be the multiplicity cardinalities defined in \eqref{mult-card}. Then the following statements are equivalent:
\begin{enumerate}
  \item[(a)] $\vert\Fact(Q)\vert>1$.
  \item[(b)] $\deg P < \#A + \#B$.
\end{enumerate}
\end{thm}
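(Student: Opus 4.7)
The implication $(\mathrm{a})\Rightarrow(\mathrm{b})$ is almost immediate from Theorem~\ref{unique-alg}. Corollary~\ref{univfact} produces at least one factorization $Q=(t-h_1)\cdots(t-h_n)(s-h)(t-k_1)\cdots(t-k_m)$ with $\deg P=2(n+m)$, and Remark~\ref{rmk:lowerbound} yields $\#A\ge 2m$, $\#B\ge 2n$. Hence $\deg P\le \#A+\#B$ holds unconditionally, and the contrapositive of Theorem~\ref{unique-alg} combined with assumption $(\mathrm{a})$ rules out equality, forcing $(\mathrm{b})$.

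For $(\mathrm{b})\Rightarrow(\mathrm{a})$ my plan is to exhibit a second factorization of $Q$ whose number of left factors differs from that of a given one; by Proposition~\ref{eindeutigequiv} this is automatically non-equivalent. Starting from a factorization of shape \eqref{factorization}, the identities $\Cj{Q_0}Q_1=HM$ (with $H$ as in~\eqref{H}) and $Q_1\Cj{Q_0}=KN$, together with the centrality of $(t-z)$ in $\Cq[t]$ and $M,N\in\Rt$, yield
\begin{equation*}
  \varrho(z)=\m{H}{z}+\m{M}{z},\qquad \lambda(z)=\m{K}{z}+\m{N}{z}.
\end{equation*}
Summing over $z$ gives $\#A=2m+\sum_z\m{K}{z}$ and $\#B=2n+\sum_z\m{H}{z}$, so $(\mathrm{b})$ is equivalent to at least one of $H$, $K$ having a complex root. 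Without loss of generality I assume $H(z_0)=0$ for some $z_0\in\C$.

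Set $w:=(z_0-k_1)\cdots(z_0-k_m)\in\Cq$, so that $H(z_0)=-\Cj{w}\,\Cj{h}\,w=0$. The hypothesis $\gcd(Q)=1$ forces $h\neq 0$ (else $s$ would divide $Q$), so $\Cj{h}$ is invertible in $\Cq$ and $w$ must be a zero divisor, i.e.\ $\No{w}=N(z_0)=0$. Hence $z_0$ is a root of some irreducible factor $N_j$ of $N$. Invoking Theorem~\ref{thm:univ}(c), I reorder the right-factor polynomial so that the new first factor $t-k_1'$ satisfies $\No{(t-k_1')}(z_0)=0$ while $\No{(t-k_i')}(z_0)\ne 0$ for $i\ge 2$; the case where $z_0$ is a multiple root of $N$ will be treated by an analogous induction on multiplicities. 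Since $H$ depends only on the \emph{product} $(t-k_1')\cdots(t-k_m')$ and on $h$, the reordering preserves $H(z_0)=0$. Cancelling the invertible factors $(z_0-k_i')$ and $\Cj{(z_0-k_i')}$ for $i\ge 2$ on both sides of the vanishing product then leaves
\begin{equation*}
  \Cj{(z_0-k_1')}\,\Cj{h}\,(z_0-k_1')=0.
\end{equation*}

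The main obstacle is to deduce from this identity, together with $\No{(z_0-k_1')}=0$, the commutation $hk_1'=k_1'h$. The plan is direct: writing $k_1'=k_0+\vec k$ with $k_0\in\R$ the real part and $\vec k$ the vector part of $k_1'$, the null condition pins $z_0=k_0\pm\mathrm{i}|\vec k|$; expanding in $\Cq$ and using that the complex unit $\mathrm{i}$ lies in the centre splits the identity into a purely-real-quaternionic part and an $\mathrm{i}$-times-real-quaternionic part, each of which must vanish. The second reduces exactly to $\vec k\,\Cj{h}=\Cj{h}\,\vec k$, which is equivalent to $hk_1'=k_1'h$, and the first then holds automatically. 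Once commutativity is established, the swap $(s-h)(t-k_1')=(t-k_1')(s-h)$ produces
\begin{equation*}
  Q=(t-h_1)\cdots(t-h_n)(t-k_1')(s-h)(t-k_2')\cdots(t-k_m'),
\end{equation*}
a factorization with $n+1$ left factors, non-equivalent to the original by Proposition~\ref{eindeutigequiv}. The symmetric case $K(z_0)=0$ is handled identically on the left side of the $s$-factor, completing the proof.
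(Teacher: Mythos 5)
Your direction (a)$\Rightarrow$(b) is correct and identical to the paper's. Your multiplicity bookkeeping $\#B=2n+\sum_z\m{H}{z}$ and $\#A=2m+\sum_z\m{K}{z}$ is also valid (additivity of multiplicities holds because $M,N\in\Rt$ are central and their cofactors do not vanish at $z$), and it matches the paper's reduction of (b) to the existence of a complex root of $H$ or of $K$. In the case where the irreducible quadratic $N^\ast\coloneqq(t-z_0)(t-\overline{z_0})$ divides $N$ exactly once, your argument is complete and genuinely more elementary than the paper's: after cancelling the invertible factors, the identity $\Cj{(z_0-k_1')}\,\Cj{h}\,(z_0-k_1')=0$ with $\No{(z_0-k_1')}=0$ and $0\neq h\in\Q$ does split, via $z_0=k_0\pm\mathrm{i}\lvert\vec k\rvert$, into a real-quaternion part and an $\mathrm{i}$-times-real-quaternion part whose simultaneous vanishing is equivalent to $h\vec k=\vec k h$. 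The paper instead derives the commutation from \cite[Proposition~2.1]{Cheng16} applied to $N^lK=EF$.

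The gap is the multiple-root case, which you defer to ``an analogous induction on multiplicities''; that induction is not analogous and does not close. If $N^\ast$ divides $N$ with multiplicity $r\ge 2$, then after reordering the $r$ leftmost right factors all have norm $N^\ast$, this block admits no further reordering (Theorem~\ref{thm:univ}(c): its factors share a common norm, so its factorization is unique), and none of $(z_0-k_1'),\dots,(z_0-k_r')$ is invertible, so you are left with $\Cj{(z_0-k_r')}\cdots\Cj{(z_0-k_1')}\,\Cj{h}\,(z_0-k_1')\cdots(z_0-k_r')=0$. Working from the inside out, if $g\coloneqq\Cj{(z_0-k_1')}\,\Cj{h}\,(z_0-k_1')\neq 0$ it is a zero divisor of $\Cq$ (its norm is $\No{(z_0-k_1')}^2\No{h}=0$), and the next step has the shape $\Cj{q}\,g\,q=0$ with $g=a+\mathrm{i}b$, $a,b\in\Q$ both possibly nonzero; the real/imaginary splitting now yields a coupled system in $a$ and $b$ that does not reduce to a commutation relation by your computation, whose base case relied essentially on $\Cj{h}$ being a nonzero \emph{real} quaternion. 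This case genuinely occurs (see Example~\ref{examplemult} and the mirrored situation in Example~\ref{exmult}, where two adjacent factors of equal norm flank the $s$-factor). To complete the proof you need a substitute for this step, e.g.\ the paper's appeal to \cite[Proposition~2.1]{Cheng16} combined with the uniqueness of the factorization $(t-k_1')\cdots(t-k_r')$, which extracts the conjugacy $k_i'=hk_i'h^{-1}$ directly.
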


\begin{proof}
  The assumptions guarantee existence of one factorization $\F{h}{h}{k}{n}{m}$
  of $Q$ of shape \eqref{factorization}. By Theorem~\ref{unique-alg}, (a)
  implies $\deg P \neq \#A + \#B$. Statement (b) then follows from $\deg
  P=2(m+n) \leq \#A+\#B$.

Let us prove that (b) implies (a). From $\deg P = 2(m+n) < \#A + \#B$ we infer $\#A >
2m$ or $\#B > 2n$. Let us assume $\#B > 2n$. The elements of $B$ are precisely the complex zeros of
$\Cj{Q_0}Q_1$. Let us again use the representation $\Cj{Q_0}Q_1=HM$ with
\begin{equation*}
  H = -\Cj{(t-k_m)}\cdots \Cj{(t-k_1)}\Cj{h}(t-k_1)\cdots(t-k_m) \quad \text{and} \quad M = \prod_{i=1}^{n}M_i,
\end{equation*}
where $M_i \coloneqq \No{(t-h_i)}$. Since $\deg M=2n$, $\#B>2n$ implies
existence of at least one $z \in B$ satisfying $H(z)=0$. But then we also obtain
$H(\overline{z})=0$, where $\overline{z}$ denotes the complex conjugate of $z$.
This is due to the fact that $H(z)=0$ is equivalent to $H_i(z)=0$ for
$i=0,1,2,3$, where $H=H_0+\qi H_1 + \qj H_2 + \qk H_3$ (we already used this
representation in the paragraph following equation \eqref{AB}). However, $H_i
\in \R[t]$ is a real polynomial, therefore $H_i(z)=0$ also implies
$H_i(\overline{z})=0$. As a consequence, we obtain $H(\overline{z})=0$. Hence
\begin{equation}
\label{vf}
  H = -\Cj{(t-k_m)}\cdots \Cj{(t-k_1)} \Cj{h} (t-k_1) \cdots (t-k_m) = N^{l}K,
\end{equation}
where $l = \m{H}{z}$, $l \ge 1$, $N \coloneqq (t-z)(t-\overline{z}) \in \Rt$ and $K \in \Qt$ with $N \nmid K$.

The class $[\F{h}{h}{k}{n}{m}]$ of equivalent factorizations contains one element where $N$ is the norm polynomial of the leftmost right factors (the $t$-factors
immediately following the $s$-factor). More precisely, there exists $r \in \{1,\ldots,m\}$ such that $\No{(t-k_i)}=N$ for $i =1, \ldots,r$ and $\No{(t-k_j)}\neq N$ for $j = r+1,\ldots,m$. It is no loss of generality to assume
that this is the given factorization $\F{h}{h}{k}{n}{m}$ of~$Q$.

Consider the product
\begin{equation}
\label{samenorm}
(t-k_1)\cdots(t-k_r).
\end{equation}
This factorization into univariate linear factors is unique because all linear
factors have the same norm polynomial (c.\,f. Theorem \ref{thm:univ}, Part~(c)).
From \eqref{vf} we obtain $N^lK = EF$ where $E \coloneqq -\Cj{(t-k_m)}\cdots
\Cj{(t-k_1)} \Cj{h}$ and $F \coloneqq (t-k_1)\cdots (t-k_m)$. By shifting
$\Cj{h}$ to the left-hand side of the factorization, we get $E =
-\Cj{h}(t-\Cj{(hk_mh^{-1})})\cdots (t-\Cj{(hk_1h^{-1})})$. Neither $E$ nor $F$
have a real polynomial factor of positive degree. Indeed, existence of such a
factor would imply $\gcd(Q)\neq 1$. However, $N^l$ is a factor of $EF$. By
\cite[Proposition~2.1]{Cheng16}, this is only possible if $l$ linear right
factors of $E$ are conjugate to $l$ linear left factors of $F$ and $N^l$ is the
product of these factors. Due to the uniqueness of factorization
\eqref{samenorm}, we obtain
\begin{equation*}
  t-k_i = t - hk_ih^{-1}
\end{equation*}
for $i \in \{1,\ldots,l\}$. Therefore,
\begin{equation*}
  \forall i \in \{1, \ldots, l\}: hk_i=k_ih.
\end{equation*}
Hence $s-h$ commutes with $t-k_1$, \ldots, $t-k_l$. By letting some of these $t$-factors commute with the $s$-factor, one obtains another non-equivalent factorization.

If $\#A>2m$, we can argue similarly and see that $s-h$ commutes with at least
one left factor.
\end{proof}

According to Theorem \ref{nonunique-alg}, $\vert\Fact(Q)\vert > 1$ implies that each
equivalence class in $\Fact(Q)$ can be represented by a factorization
$\F{h}{h}{k}{n}{m}$ where either $t-h_n$ and $s-h$ commute or $s-h$ and $t-k_1$
commute. If $t-h_n$ and $s-h$ commute, then $\F{h}{h}{\ov{k}}{n-1}{m+1}$ with
$\ov{k}_1 = h_n$ and $\ov{k}_l = k_{l-1}$ for $l \in \{2,\ldots,m+1\}$
is a new (non-equivalent) factorization. We call the transition from
$[\F{h}{h}{k}{n}{m}]$ to $[\F{h}{h}{\ov{k}}{n-1}{m+1}]$ a \emph{left jump} of
the $s$-factor. Similar statements hold true for commuting factors $s-h$ and $t
- k_1$. In this case we speak of a \emph{right jump} of the $s$-factor.
Let us demonstrate the statements of Theorem~\ref{nonunique-alg} by means of an example.

\begin{example}
\label{exmult}
Consider the polynomial
\begin{equation}
\label{factmultex}
  Q = \left(t+\qk\right)\left(t-\frac{2\qi}{3}+\frac{2\qj}{3}-\frac{\qk}{3}\right) \left(t-\frac{\qi}{3}+\frac{\qj}{3}-\frac{5\qk}{3}\right) \left(s-2\qk\right) \left(t-\qi-\qj+\qk\right).
\end{equation}
Its norm polynomial is given by
\begin{equation*}
  \No{Q} = PR
  \quad\text{with}\quad
  P = (t^2 + 3)^2(t^2 + 1)^2,\quad R = s^2 + 4.
\end{equation*}
From \eqref{factmultex} we obtain $n=3$ and $m=1$. Moreover, $\#A=6>2m$,
$\#B=6=2n$ and $\deg P < \#A + \#B$. Hence, by Theorem \ref{nonunique-alg},
$\vert\Fact(Q)\vert>1$. Let us precisely investigate the elements of set $A$. It holds
that
\begin{equation*}
  A = \{\mathrm{i},-\mathrm{i},\sqrt{3}\mathrm{i},-\sqrt{3}\mathrm{i}\}.
\end{equation*}
The right factor $t-\qi-\qj+\qk$ corresponds to the complex numbers
$\sqrt{3}\mathrm{i}$ and $-\sqrt{3}\mathrm{i}$ (we have
$\No{(t-\qi-\qj+\qk)}=t^2+3$). Moreover, $\lambda(\pm \sqrt{3}\mathrm{i})=1$.
Consequently, $s-2\qk$ does not perform a \emph{left jump} by commuting with a
left factor of norm $t^2+3$.

Let us now consider the elements $\mathrm{i},-\mathrm{i} \in A$. It holds that $\lambda(\pm \mathrm{i})=2$. There does not exist a right factor of factorization \eqref{factmultex} with norm polynomial $t^2+1$. Therefore, two left jumps of the $s$-factor are possible. Following the proof of Theorem \ref{nonunique-alg}, we compute another (equivalent) factorization of $Q$ where the two rightmost left factors have norm polynomial $t^2+1$. The respective factors commute with $s-2\qk$:
\begin{equation*}
  Q = (t-\qi+\qj+\qk)(t-\qk)(t-\qk)(s-2\qk)(t-\qi-\qj+\qk)
\end{equation*}
Indeed, $(t-\qk)(s-2\qk)=(s-2\qk)(t-\qk)$. Further equivalence classes of factorizations of $Q$ can be found in this way, that is
\begin{gather*}
    [(t-\qi+\qj+\qk)(t-\qk)(t-\qk)(s-2\qk)(t-\qi-\qj+\qk)],\\
    [(t-\qi+\qj+\qk)(t-\qk)(s-2\qk)(t-\qk)(t-\qi-\qj+\qk)],\\
    [(t-\qi+\qj+\qk)(s-2\qk)(t-\qk)(t-\qk)(t-\qi-\qj+\qk)].
\end{gather*}
By Theorem~\ref{jumps} below, these are all elements of $\Fact(Q)$.
\end{example}

In Corollary \ref{unique-geom} we provided a geometric condition which
guarantees uniqueness of a factorization of $Q$: If $\rul$ is a straight line
for all zeros $z \in \C$ of the norm polynomial's univariate factor $P \in \Rt$,
we obtain $\vert\Fact(Q)\vert=1$. However, as shown in Example \ref{ex4}, the converse
need not be true. Nevertheless, it turns out to be true if we require an
additional assumption to be satisfied:

\begin{cor}
Let $Q \in \Hstar{1}$ satisfy the assumptions of Theorem~\ref{nonunique-alg}. Define the polynomials $M$ and $N$ according to $\eqref{irred1}$ and $\eqref{irred2}$ and suppose that $\gcd(M,N)=1$. If there exists a zero $z \in \C$ of the norm polynomial's univariate $t$-factor $P \in \Rt$ such that $\rul$ is just a point, we obtain $\vert\Fact(Q)\vert>1$.
\end{cor}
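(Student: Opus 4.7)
The plan is to apply the Non-Uniqueness Theorem~\ref{nonunique-alg}, i.e.\ to verify $\deg P < \#A + \#B$. Since the hypotheses of Theorem~\ref{nonunique-alg} are satisfied, Corollary~\ref{univfact} supplies a factorization of the form \eqref{factorization}, whence $\deg P = 2(m+n)$, and Remark~\ref{rmk:lowerbound} furnishes the baseline estimates $\#A \geq 2m$ and $\#B \geq 2n$. The task is to promote at least one of these bounds to a strict inequality.

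First I would establish that whenever $\rul$ collapses to a point at a zero $z$ of $P$, we automatically have $z \in A \cap B$. Substituting $t = z$ in $\No{Q} = PR$ shows that $\No{Q_0(z) + sQ_1(z)}$ vanishes identically in $s$, so comparing the $s^2$-coefficient yields $\No{Q_1(z)} = 0$, and analogously $\No{Q_0(z)} = 0$. If $Q_0(z) = 0$ or $Q_1(z) = 0$, both products $Q_1(z)\Cj{Q_0}(z)$ and $\Cj{Q_0}(z)Q_1(z)$ vanish trivially. Otherwise $[Q_0(z)] = [Q_1(z)]$ provides $\lambda \in \C \setminus \{0\}$ with $Q_0(z) = \lambda Q_1(z)$, and -- since complex scalars lie in the center of $\Cq$ -- both products reduce to $\bar\lambda\No{Q_1(z)} = 0$.

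The decisive step invokes $\gcd(M, N) = 1$: together with $P = MN$ this forces $z$ to be a zero of exactly one of $M$ or $N$. Assume without loss of generality $M(z) = 0 \neq N(z)$; the other case is symmetric upon swapping the roles of $A$ and $B$. Then $z$ is not among the zeros of $N$. By Remark~\ref{rmk:lowerbound}, the zeros of $N$ already contribute (with $\lambda$-multiplicities) at least $\deg N = 2m$ to $\#A$, so the extra term $\lambda(z) \geq 1$ coming from $z \in A$ gives $\#A \geq 2m + 1$. Combined with $\#B \geq 2n$ we obtain $\#A + \#B > 2(m+n) = \deg P$, and Theorem~\ref{nonunique-alg} concludes $|\Fact(Q)| > 1$.

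The main subtlety is ensuring that $z$ truly contributes a \emph{new} unit of multiplicity to $\#A$ beyond what is already accounted for by the zeros of $N$; this is exactly what the hypothesis $\gcd(M,N) = 1$ guarantees. Example~\ref{ex4}, where $M = N = t^2+1$ and this hypothesis fails, shows that the coprimality assumption cannot simply be dropped.
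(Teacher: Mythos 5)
Your proof is correct and takes essentially the same approach as the paper: both arguments use the degeneration of $\rul$ to place $z$ in $A \cap B$ and then use $\gcd(M,N)=1$ to show that this membership is not already accounted for by the lower bounds of Remark~\ref{rmk:lowerbound}, the only (cosmetic) difference being that you verify criterion (b) of Theorem~\ref{nonunique-alg} and apply the theorem as a black box, whereas the paper re-enters the theorem's proof at the step $H(z)=0$ to exhibit a jump directly. One negligible slip: since quaternion conjugation fixes complex scalars, $\Cj{(\lambda Q_1(z))}=\lambda\,\Cj{Q_1}(z)$, so the products reduce to $\lambda\No{Q_1(z)}$ rather than $\bar\lambda\No{Q_1(z)}$ --- either way they vanish.
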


\begin{proof}
  Let us first assume that $z$ is a zero of $N$ (note that $P=MN$). Since $\rul$ is just a point, we obtain $\Cj{Q_0}(z)Q_1(z)=H(z)M(z)=0$, where $H$ is defined according to \eqref{H}. The fact $\gcd(M,N)=1$ then implies $H(z)=0$. Following the proof of Theorem \ref{nonunique-alg}, a \emph{right jump} of the $s$-factor is possible. If $z$ is a zero of $M$, one can perform a \emph{left jump} of the $s$-factor. In both cases one obtains $\vert\Fact(Q)\vert>1$.
\end{proof}

\begin{thm}
  \label{jumps}
  Suppose $Q \in \Hstar{1}$ satisfies the assumptions of Theorem \ref{nonunique-alg}. All elements of $\Fact(Q)$ can be obtained by repeated application of left and
  right jumps of the $s$-factor.
\end{thm}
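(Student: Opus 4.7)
The plan is to parametrise equivalence classes of factorisations by the position of the $s$-factor and to show that jumps realise the adjacency on this parameter. For $[F] \in \Fact(Q)$ represented by $\F{h}{h}{k}{n}{m}$, define the \emph{position} $p([F]) := n$; by Proposition~\ref{eindeutigequiv} this is well defined, and a left (respectively right) jump changes $p$ by $-1$ (respectively $+1$). Reading the proof of Theorem~\ref{nonunique-alg} quantitatively gives: a right jump from $[F]$ is possible whenever $n < \#B/2$, and a left jump is possible whenever $n > \deg_t Q_1 - \#A/2$. Combined with the lower bounds from Remark~\ref{rmk:lowerbound}, the positions realised by $\Fact(Q)$ lie in the integer interval $\mathcal{I} := [\deg_t Q_1 - \#A/2,\ \#B/2]$, and every class at a non-extremal position jumps in both directions.

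Next I would prove that each position in $\mathcal{I}$ hosts at most one equivalence class. Suppose $Q = L_0(s-h_0)R_0 = L_1(s-h_1)R_1$ with monic $L_i, R_i \in \Qt$ and $\deg L_0 = \deg L_1$. Comparing $s$-coefficients yields $L_0 R_0 = L_1 R_1 = Q_1$ and $L_0 h_0 R_0 = L_1 h_1 R_1$. Setting $U := L_1^{-1} L_0 \in \Q(t)$ (equivalently $U = R_1 R_0^{-1}$), the second equation reduces to the intertwining identity $U h_0 = h_1 U$. Writing $U = N/d$ in lowest terms with $N \in \Qt$ and $d \in \Rt$, reading this identity coefficient-wise, and using the hypothesis $\gcd(Q) = 1$ (which forces $h_0, h_1 \notin \R$ and prevents real polynomial factors in the $L_i$, $R_i$), one shows $\No{U} = 1$ and hence $\No{L_0} = \No{L_1}$. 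Univariate uniqueness of a left divisor of $Q_1 \in \Qt$ with a prescribed norm polynomial (a standard consequence of the theory in Theorem~\ref{thm:univ}) then yields $L_0 = L_1$, whence $R_0 = R_1$ and $h_0 = h_1$; by Proposition~\ref{eindeutigequiv} the two factorisations are equivalent.

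Combining these two steps, the realised positions form an interval in $\mathcal{I}$ with exactly one class per position, and adjacent positions are connected by single jumps. Hence from any given class at position $n_0$, left jumps sweep all realised positions below $n_0$ and right jumps all realised positions above; every class in $\Fact(Q)$ is thereby obtained by repeated left and right jumps of the $s$-factor, as claimed.

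The main obstacle is the uniqueness-per-position step. The intertwining identity $U h_0 = h_1 U$ alone is not restrictive enough, since the intertwiner space in $\Q$ is already two-dimensional as soon as $h_0$ and $h_1$ are similar quaternions. Excluding a non-trivial rational $U$ requires combining (i) the polynomial nature and equal degree of $L_0$ and $L_1$, (ii) the polynomial identity $L_0 d = L_1 N$ together with a norm calculation, and (iii) the hypothesis $\gcd(Q) = 1$ to prevent a common real factor sneaking in. The subcase $h_0 \neq h_1$ with $h_0$ and $h_1$ similar --- two potentially non-equivalent factorisations with different $s$-factors at the same position --- is the one that requires the most care.
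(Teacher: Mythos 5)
Your overall strategy---choose a complete invariant of equivalence classes, show that jumps change it in unit steps, and conclude reachability---has the right shape, but the invariant you pick is too coarse and the central claim of your second step is false. You parametrize classes by the position $p([F])=n$ and assert that each position hosts at most one equivalence class. Counterexample: $Q=(t-\qk)(s-\qk)(t-1-\qk)$. All three factors lie in the commutative subalgebra generated by $\qk$ and hence commute pairwise, $\gcd(Q)=1$, and $\No{Q}=(t^2+1)(t^2-2t+2)(s^2+1)$, so the hypotheses of Theorem~\ref{nonunique-alg} hold. Both $(t-\qk)(s-\qk)(t-1-\qk)$ and $(t-1-\qk)(s-\qk)(t-\qk)$ are factorizations of $Q$ with $n=1$, yet they are non-equivalent because $\No{(t-\qk)}=t^2+1\neq t^2-2t+2=\No{(t-1-\qk)}$. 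By Definition~\ref{def:equivalence} the class is determined by the polynomial $M=\prod_i\No{(t-h_i)}$, i.e.\ by \emph{which} irreducible factors of $P$ sit to the left of the $s$-factor, not merely by how many. Your proposed proof of uniqueness-per-position cannot be repaired either: in this example $L_0=t-\qk$, $L_1=t-1-\qk$, $h_0=h_1=\qk$, the intertwining identity $Uh_0=h_1U$ holds trivially, and $\No{U}=\No{L_0}/\No{L_1}=(t^2+1)/(t^2-2t+2)\neq 1$, so the step ``$\No{U}=1$, hence $\No{L_0}=\No{L_1}$, hence $L_0=L_1$'' is simply wrong. (In this example there are four classes distributed over the three positions $0,1,2$, so the whole interval picture breaks down.)

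The paper's proof works with the finer invariant directly: given two classes with left-norm polynomials $M$ and $\ov{M}$, it compares $\m{M}{z}$ and $\m{\ov{M}}{z}$ for each zero $z$ of $P$ and uses the construction from the proof of Theorem~\ref{nonunique-alg} to show that whenever these multiplicities differ, a jump in the appropriate direction is available and adjusts exactly that multiplicity; iterating until all multiplicities agree yields a factorization equivalent to the target by Definition~\ref{def:equivalence}. If you want to keep your framework, replace the integer $n$ by the tuple of multiplicities $\bigl(\m{M}{z}\bigr)_z$; then ``one class per value of the invariant'' is the content of Definition~\ref{def:equivalence} and needs no proof, and what remains to be shown is precisely the reachability argument above, which your first step (read off from Theorem~\ref{nonunique-alg}) essentially already contains.
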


\begin{proof}
  Given two different classes $[\F{h}{h}{k}{n}{m}]$ and
  $[\F{\ov{h}}{\ov{h}}{\ov{k}}{l}{r}]$ of factorizations we proceed as follows.
  We define $M$ and $\ov{M}$ according to \eqref{irred1}. For each complex zero
  $z \in \C$ of the norm polynomial's univariate factor $P \in \Rt$ we compute
  the multiplicities $\m{M}{z}$ and $\m{\ov{M}}{z}$. If
  $\m{M}{z}<\m{\ov{M}}{z}$, we follow the proof of Theorem \ref{nonunique-alg}
  and successively perform \emph{right jumps} of the $s$-factor of factorization
  $[\F{h}{h}{k}{n}{m}]$ until the multiplicities coincide. Similarly, if
  $\m{\ov{M}}{z}<\m{M}{z}$, we obtain equality of multiplicities by sequentially
  applying \emph{left jumps} of the $s$-factor of factorization
  $[\F{h}{h}{k}{n}{m}]$. We then obtain an equivalence class which is equal to
  $[\F{\ov{h}}{\ov{h}}{\ov{k}}{l}{r}]$ by Definition~\ref{def:equivalence}.
\end{proof}

\begin{example}
  \label{ex2c}
  Let us illustrate Theorem~\ref{jumps} at hand of the polynomial $Q$ of
  Examples~\ref{ex2} and \ref{ex2b}. We have already computed the two
  non-equivalent factorizations
  \begin{equation*}
    \begin{aligned}
      Q &= (t - \qi -2)(s-\qi-\qj)\Bigl(t + \frac{4\qi}{5} + \frac{3\qj}{5} - 1\Bigr)\Bigl(t + \frac{\qi}{5} + \frac{7\qj}{5} - 1\Bigr) \\
      &= (t+\qj+\qk-1)(t-\qk-2)(s-\qi-\qj)(t + \qj - 1).
    \end{aligned}
  \end{equation*}
  Moreover, we have
  \begin{equation*}
    \begin{aligned}
    t^2-4t+5&=\No{(t-\qi-2)}\\ &= \No{(t-\qk-2)},\\
    t^2-2t+2&=\No{\Bigl(t + \frac{4\qi}{5} + \frac{3\qj}{5} - 1\Bigr)}\\ &= \No{(t+\qj-1)},\\
    t^2-2t+3&=\No{\Bigl(t+\frac{\qi}{5}+\frac{7\qj}{5}-1\Bigr)} \\
    & = \No{(t+\qj+\qk-1)}.
    \end{aligned}
  \end{equation*}
  The first quadratic polynomial corresponds to a left factor and the second to
  a right factor in both factorizations, respectively. In order to make the two
  factorizations equal (or equivalent) by jumps of the $s$-factor, we should
  therefore consider the third quadratic polynomial.

  The product of the two rightmost $t$-factors of the first factorization admits
  a second factorization:
\begin{equation*}
	\Bigl(t + \frac{4\qi}{5} + \frac{3\qj}{5} - 1\Bigr)\Bigl(t + \frac{\qi}{5} + \frac{7\qj}{5} - 1\Bigr) = (t + \qi + \qj - 1) (t + \qj - 1).
\end{equation*}
The factors $s-\qi-\qj$ and $t+\qi+\qj-1$ commute so that
\begin{equation*}
	Q=(t - 2 - \qi)(t + \qi + \qj - 1)(s-\qi-\qj)(t + \qj - 1).
\end{equation*}
This is already equivalent to the second factorization since
$(t - \qi - 2)(t + \qi + \qj - 1) = (t+\qj+\qk-1)(t-\qk-2)$.
\end{example}

\begin{rmk}
\label{rmk:possjumps}
Suppose $Q\in \Hstar{1}$ satisfies the assumptions of Theorem~\ref{nonunique-alg}. The number of possible jumps of the $s$-factor of a given factorization of $Q$ can be counted with the help of the multiplicity cardinalities $\#A$ and $\#B$. One can perform
\begin{equation*}
	\#A+\#B-\deg P
\end{equation*}
jumps of the $s$-factor. More precisely, $\#A-2m$ left jumps and $\#B-2n$ right jumps are possible, where $m$ denotes the number of right factors and $n$ denotes the number of left factors of the given factorization of $Q$. This immediately follows from the proof of Theorem~\ref{nonunique-alg}.
\end{rmk}

\begin{cor}
	Suppose $Q \in \Hstar{1}$ satisfies the assumptions of Theorem \ref{nonunique-alg}. All elements of $\fact(Q)$, that is, all possible factorizations of $Q$ with monic univariate linear factors, can be found by performing the following three steps:
\begin{enumerate}
\item[Step 1:] Compute a factorization of $Q$ with monic univariate linear
  factors (Theorem~\ref{SK2}, Corollary~\ref{univfact}).
\item[Step 2:] Perform all possible left/right jumps of the $s$-factor to obtain
  $\#A+\#B-\deg P$ representatives of different equivalence classes of
  $\Fact(Q)$. All representatives are of the form $A(s-h)B$ with univariate
  polynomials $A, B \in \Qt$ and $h \in \Q$ (Theorem~\ref{jumps} and Remark
  \ref{rmk:possjumps}).
\item[Step 3:] Compute all possible factorizations of $A$ and $B$ with
  univariate linear factors (Theorem~\ref{thm:univ}, Part~(c)).
\end{enumerate}
\end{cor}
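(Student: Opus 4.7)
The plan is to assemble three previously established results: existence of a factorization, exhaustiveness of jumps across equivalence classes, and the characterization of equivalence classes as orbits under re-factorization of the two univariate products. Each of the three steps in the statement corresponds to one of these ingredients.

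First, I would justify Step~1 by citing Corollary~\ref{univfact}: since $Q\in\Hstar{1}$ and the necessary factorization condition $\No{Q}=PR$ is part of the assumptions of Theorem~\ref{nonunique-alg}, the constructive proof of Theorem~\ref{SK2} yields a factorization of the form $\F{h}{h}{k}{n}{m}$. This produces at least one element of $\fact(Q)$ and fixes a base point in $\Fact(Q)$.

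For Step~2, I would appeal directly to Theorem~\ref{jumps}: starting from the representative produced in Step~1, every equivalence class $[\F{\ov{h}}{\ov{h}}{\ov{k}}{l}{r}]\in\Fact(Q)$ is reached by a finite sequence of left and right jumps of the $s$-factor. I would then invoke Remark~\ref{rmk:possjumps} to explain the count: exactly $\#A-2m$ left jumps and $\#B-2n$ right jumps are admissible on a given factorization, so in total $\#A+\#B-\deg P$ jumps are possible, producing representatives of the distinct equivalence classes in $\Fact(Q)$. Each such representative automatically has the shape $A(s-h)B$ with $A,B\in\Qt$ because a jump only transfers a commuting $t$-factor across the unique $s$-factor without altering the coefficient~$h$.

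For Step~3, the key point is Proposition~\ref{eindeutigequiv}: two factorizations belonging to the same equivalence class necessarily share the same $s$-factor $s-h$ and the same products $(t-h_1)\cdots(t-h_n)$ and $(t-k_1)\cdots(t-k_m)$ as univariate polynomials in $\Qt$. Hence, once a representative $A(s-h)B$ of each class has been fixed by Steps~1 and~2, the remaining freedom within the class is exactly the freedom of factorizing the univariate quaternionic polynomials $A$ and $B$ into linear factors, and all such factorizations are enumerated by Theorem~\ref{thm:univ}, Part~(c).

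Finally, I would close by observing that $\fact(Q)=\bigsqcup_{[F]\in\Fact(Q)}[F]$ by definition, so completeness of Steps~1--3 follows: Step~2 enumerates one representative per class, and Step~3 enumerates all elements of each class. The main (and essentially only) thing to be careful about is the bookkeeping that confirms the count in Step~2 is exhaustive rather than merely sufficient; this is guaranteed because the proof of Theorem~\ref{jumps} shows that matching the irreducible factor multiplicities of $M$ across two classes fully determines the required sequence of jumps, leaving no additional equivalence classes unaccounted for.
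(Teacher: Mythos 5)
Your proposal is correct and follows exactly the route the paper intends: the paper's own proof is simply the one-line remark that the corollary summarizes Corollary~\ref{univfact}, Theorem~\ref{jumps}, Remark~\ref{rmk:possjumps}, Proposition~\ref{eindeutigequiv} and Theorem~\ref{thm:univ}(c), which is precisely the assembly you carry out in more detail. Your version merely makes explicit the bookkeeping (one representative per class via jumps, then within-class enumeration via univariate refactorization) that the paper leaves implicit.
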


\begin{proof}
	The statement is just a summary of the mentioned theorems and corollaries.
\end{proof}

\section{Applications in Kinematics and Future Research}
\label{sec:kinematics}

As mentioned in Section~\ref{sec:introduction}, one motivation for our study are
applications in kinematics. This section explains the underlying ideas and
demonstrates, why the factorization theory of bivariate quaternionic polynomials
needs to be extended to polynomials of higher bi-degree in order to allow the
construction of interesting mechanisms. For background information in the
relation of quaternions to (spherical) kinematics we refer to \cite{hegedus13}.

A vector $(x_1,x_2,x_3) \in \R^3$ is identified with the \emph{vectorial
  quaternion $x = x_1\qi + x_2\qj + x_3\qk$.} The quaternion $q \in \Q \setminus
\{0\}$ acts on $\R^3$ via
\begin{equation}
  \label{rotation}
  x \mapsto \frac{qx\Cj{q}}{\No{q}}.
\end{equation}
The map \eqref{rotation} is a rotation around the vector $q-\Cj{q}$ (or the
identity if $q - \Cj{q} = 0$). The action \eqref{rotation} can be extended to
quaternionic polynomials by replacing $q$ with a quaternionic polynomial.
Univariate polynomials then give one-parametric rational spherical motions,
bivariate polynomials give two-parametric motions etc. For the sake of
simplicity, we henceforth do not distinguish between motions and polynomials.

In this sense, the linear polynomial $t - h \in \Q[t]$ is a rotation around the
fixed vector $h-\Cj{h}$. This important observation relates our factorizations
with linear univariate factors to mechanisms with revolute joints. A
factorization $\F{h}{h}{k}{n}{m}$ of $Q \in \Hstar{1}$ describes a mechanism
consisting of $n+1+m$ revolute joints, connected in sequential order, that can
perform the motion $Q$. The mechanism's movement when following the motion $Q$
requires that the first $n$ joints and the last $m$ joints share the same motion
parameter. In order to achieve this mechanically, that is, without individually
controlling each joint parameter, it is necessary to further constrain the
mechanism. In the univariate case this can be done by considering further
factorizations, each yielding a new sub-mechanism or ``leg'' that can be added.
In this way, we may think of the univariate $t$-factors $T_l =
(t-h_1)\cdots(t-h_n)$ to the left and $T_r = (t-k_1)\cdots(t-k_m)$ to the right
of the $s$-factor as ``higher-order'' joints with a single degree of freedom.
The ``mechanism'' $T_l(s-h)T_r$ corresponds to all factorizations in one class
of equivalent factorizations and serves as one single leg.

The polynomial $Q$ may admit non-equivalent factorizations but they cannot be
used as additional legs because they do not further constrain the mechanism: The
left and right jumps of Theorem~\ref{jumps} interchange commuting factors. These
have linearly dependent vector parts and hence correspond to identical revolute
joints. Therefore, the legs to non-equivalent factorizations are actually identical
from a mechanism science viewpoint.

This is illustrated at hand of a simple example in Figure~\ref{fig:kinematics}.
This figure refers to planar kinematics (all revolute axes are parallel) while
our results pertain to spherical kinematics (all revolute axes are concurrent).
The principle ideas and problems are the same but planar kinematics is easier
and clearer to visualize. We consider a polynomial $Q = T_l(s-h)(t-k)$ where
$\deg T_l = 2$. The top row illustrates our original hopes: The first image
shows the mechanism to the equivalence class of the factorization
$T_l(s-h)(t-k)$, the second image shows the mechanism to the equivalence class
obtained after a right jump of $s-h$. This mechanism has the desired two degrees
of freedom and could, in principle, be used as one leg. However, our results
imply that $s-h$ and $t-k$ commute so that the true situation is that of the
second row where the left and the right mechanisms are actually the same.

\begin{figure}
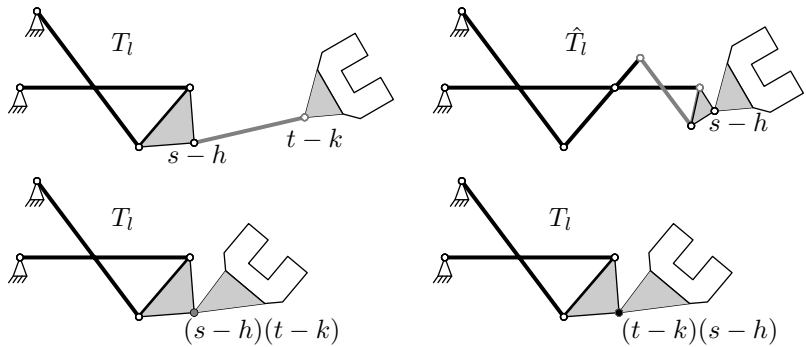

  \centering
  \begin{overpic}{asy/idea}
    \put(13,36){$T_l$}
    \put(20,22){$s-h$}
    \put(35,24){$t-k$}
    \put(70,36){$\hat{T}_l$}
    \put(88,26){$s-h$}
    \put(22,0){$(s-h)(t-k)$}
    \put(77,0){$(t-k)(s-h)$}
    \put(13,14){$T_l$}
    \put(68,14){$T_l$}
  \end{overpic}
  \caption{The construction of mechanisms from non-equivalent factorizations
    fails as the two mechanisms in the bottom row are identical.}
  \label{fig:kinematics}
\end{figure}

Failure of immediate kinematic applications should not prevent us from further
investigation on factorizability of bi- and multivariate quaternionic
polynomials. A natural question is factorizability of polynomials of arbitrary
bi-degree which is addressed in \cite{lercher21}. That article also presents an
example of a closed-loop mechanism of eight revolute joints with remarkable
properties whose construction is based on two factorizations of a quaternionic
polynomial of bi-degree $(2,2)$ and its extension to the algebra of dual
quaternions \cite{hegedus13,Husty10,Gentili21}. Our investigations in this
article provide necessary foundations for these extensions.

\section*{Acknowledgments}

Daniel F. Scharler was supported by the Austrian Science Fund (FWF): P~31061 The Algebra of Motions in 3-Space.
Johannes Siegele was supported by the Austrian Science Fund (FWF): P~30673 Extended Kinematic Mappings and Application to Motion Design.

\bibliographystyle{elsarticle-num}

\begin{thebibliography}{10}
\expandafter\ifx\csname url\endcsname\relax
  \def\url#1{\texttt{#1}}\fi
\expandafter\ifx\csname urlprefix\endcsname\relax\def\urlprefix{URL }\fi
\expandafter\ifx\csname href\endcsname\relax
  \def\href#1#2{#2} \def\path#1{#1}\fi

\bibitem{niven41}
I.~Niven, Equations in quaternions, Amer. Math. Monthly 48~(10) (1941)
  654--661.

\bibitem{gordon65}
B.~Gordon, T.~S. Motzkin, On the zeros of polynomials over division rings,
  Trans. Amer. Math. Soc. 116 (1965) 218--226.

\bibitem{hegedus13}
G.~Hegedüs, J.~Schicho, H.-P. Schröcker, Factorization of rational curves in
  the {Study} quadric and revolute linkages, Mech. Mach. Theory 69~(1) (2013)
  142--152.
\newblock \href {https://doi.org/10.1016/j.mechmachtheory.2013.05.010}
  {\path{doi:10.1016/j.mechmachtheory.2013.05.010}}.

\bibitem{hegedus15}
G.~Hegedüs, J.~Schicho, H.-P. Schröcker, Four-pose synthesis of
  angle-symmetric {6R} linkages, ASME J. Mechanisms Robotics 7~(4) (2015).
\newblock \href {https://doi.org/10.1115/1.4029186}
  {\path{doi:10.1115/1.4029186}}.

\bibitem{Gentili08}
G.~Gentili, C.~Stoppato, Zeros of regular functions and polynomials of a
  quaternionic variable, Michigan Math. J. 56 (2008) 655--667.
\newblock \href {https://doi.org/10.1307/mmj/1231770366}
  {\path{doi:10.1307/mmj/1231770366}}.

\bibitem{li18}
Z.~Li, D.~F. Scharler, H.-P. Schröcker, Factorization results for left
  polynomials in some associative real algebras: {State} of the art,
  applications, and open questions, J. Comput. Appl. Math. 349 (2019) 508--522.
\newblock \href {https://doi.org/10.1016/j.cam.2018.09.045}
  {\path{doi:10.1016/j.cam.2018.09.045}}.

\bibitem{li19}
Z.~Li, J.~Schicho, H.-P. Schröcker, Factorization of motion polynomials, J.
  Symbolic Comput. 92 (2019) 190--202.
\newblock \href {https://doi.org/10.1016/j.jsc.2018.02.005}
  {\path{doi:10.1016/j.jsc.2018.02.005}}.

\bibitem{Skopenkov19}
M.~Skopenkov, R.~Krasauskas, Surfaces containing two circles through each
  point, Math. Ann. 373 (2019) 1299--1327.
\newblock \href {https://doi.org/10.1007/s00208-018-1739-z}
  {\path{doi:10.1007/s00208-018-1739-z}}.

\bibitem{lercher21}
J.~Lercher, H.-P. Schröcker, A multiplication technique for the factorization
  of bivariate quaternionic polynomials, Adv. Appl. Clifford Algebras 32~(1)
  (2022).
\newblock \href {https://doi.org/10.1007/s00006-021-01194-9}
  {\path{doi:10.1007/s00006-021-01194-9}}.

\bibitem{huang02}
L.~Huang, W.~So, Quadratic formulas for quaternions, Appl. Math. Lett. 15~(15)
  (2002) 533--540.
\newblock \href {https://doi.org/10.1016/S0893-9659(02)80003-9}
  {\path{doi:10.1016/S0893-9659(02)80003-9}}.

\bibitem{beauregard93}
R.~A. Beauregard, When is {$F[x,y]$} a unique factorization domain?, Proc. Am.
  Math. Soc. 117~(1) (1993) 67--70.
\newblock \href {https://doi.org/10.1090/S0002-9939-1993-1132407-8}
  {\path{doi:10.1090/S0002-9939-1993-1132407-8}}.

\bibitem{casas-alvero14}
E.~Casas-Alvero, Analytic Projective Geometry, European Mathematical Society,
  Zürich, 2014.

\bibitem{Siegele20}
J.~Siegele, D.~F. Scharler, H.-P. Schröcker, Rational motions with generic
  trajectories of low degree, Comput. Aided Geom. Design 76 (2020).
\newblock \href {https://doi.org/10.1016/j.cagd.2019.101793}
  {\path{doi:10.1016/j.cagd.2019.101793}}.

\bibitem{Pottmann10}
H.~Pottmann, J.~Wallner, Computational Line Geometry, Mathematics and
  Visualization, Springer, Heidelberg, 2010, 2nd printing.

\bibitem{Cheng16}
C.~C.-A. Cheng, T.~Sakkalis, On new types of rational rotation-minimizing frame
  space curves, J. Symbolic Comput. 74 (2016) 400 -- 407.
\newblock \href {https://doi.org/10.1016/j.jsc.2015.08.005}
  {\path{doi:10.1016/j.jsc.2015.08.005}}.

\bibitem{Husty10}
M.~Husty, H.-P. Schröcker, Algebraic geometry and kinematics, in: I.~Z.
  Emiris, F.~Sottile, T.~Theobald (Eds.), Nonlinear Computational Geometry,
  Vol. 151 of The IMA Volumes in Mathematics and its Applications, Springer,
  New York, Dordrecht, Heidelberg, London, 2010, Ch. Algebraic Geometry and
  Kinematics, pp. 85--107.
\newblock \href {https://doi.org/10.1007/978-1-4419-0999-2_4}
  {\path{doi:10.1007/978-1-4419-0999-2_4}}.

\bibitem{Gentili21}
G.~Gentili, C.~Stoppato, T.~Trinci, Zeros of slice functions and polynomials
  over dual quaternions, Trans. Amer. Math. Soc. 374 (2021) 5509--5544.
\newblock \href {https://doi.org/10.1090/tran/8346}
  {\path{doi:10.1090/tran/8346}}.

\end{thebibliography}

\end{document}